\documentclass{amsart}

\usepackage{amssymb}
\usepackage{amsmath}
\usepackage{amsthm}
\usepackage{amsrefs}
\usepackage{comment}
\usepackage{enumitem}

\usepackage[margin=1in]{geometry} 
\usepackage{setspace}
\doublespacing

\newcommand{\p}{\mathfrak{p}}
\newcommand{\m}{\mathfrak{m}}
\newcommand{\fq}{\mathfrak{q}}
\newcommand{\fa}{\mathfrak{a}}
\newcommand{\fb}{\mathfrak{b}}

\newcommand\height{\operatorname{ht}}
\newcommand\spec{\operatorname{Spec}}

\newcommand\cal{\mathcal}
\def\frak{\mathfrak}

\newtheorem{thm}{Theorem}[section]

\newtheorem{lemma}[thm]{Lemma}

\theoremstyle{definition}
\newtheorem{definition}[thm]{Definition}
\newtheorem{assumption}[thm]{Assumption}

\newtheorem{example}[thm]{Example}
\newtheorem*{remark}{Remark}
\newtheorem*{question}{Question}

\title{Completely Controlling the Dimensions of Formal Fiber Rings at Prime Ideals of Small Height}
\author{Sarah M. Fleming, Lena Ji, S. Loepp, Peter M. McDonald, Nina Pande, and David Schwein}

\begin{document}
\begin{abstract}
Let $T$ be a complete equicharacteristic local (Noetherian) UFD of dimension $3$ or greater.
Assuming that $|T| = |T/\m|$, where $\m$ is the maximal ideal of $T$,
we construct a local UFD $A$ whose completion is $T$ and whose formal fibers at height one prime ideals
have prescribed dimension between zero and the dimension of the generic formal fiber.
If, in addition, $T$ is regular and has characteristic zero, we can construct $A$ to be excellent.
\end{abstract}
\maketitle

\section{Introduction}
Let $A$ be a local (Noetherian) ring with maximal ideal~$\m$ and $\m$-adic completion~$T$.
To understand the relationship between $\spec A$ and~$\spec T$
it is useful to consider a simplifying numerical quantity,
the dimensions of the formal fiber rings of~$A$,
which roughly measures how much ``larger'' $\spec T$ is than~$\spec A$.
We briefly recall the pertinent definitions.
The \emph{formal fiber ring} of~$A$ at the prime ideal~$\p$
is $T\otimes_A\kappa(\p)$, where $\kappa(\p)$ is the residue field $A_\p\slash\p A_\p$;
the \emph{formal fiber} of~$A$ at~$\p$ is the spectrum of the formal fiber ring of~$A$ at~$\p$;
if~$A$ is an integral domain, the \emph{generic formal fiber ring} of ~$A$ is the formal fiber ring of $A$ at~$(0)$ and the \emph{generic formal fiber} of~$A$ is the formal fiber of $A$ at~$(0)$;
the \emph{dimension of the formal fiber} of~$A$ at~$\p$,
denoted by~$\alpha(A,\p)$, is the dimension
of the formal fiber ring of~$A$ at~$\p$.
The definitions are greatly clarified by 
identifying elements of the formal fibers of~$A$ with prime ideals of~$T$.
This identification uses the natural bijection between
the formal fiber of~$A$ at~$\p$ and
the inverse image of~$\p$ under the map $\spec T\to\spec A$
given by $\fq\mapsto\fq\cap A$.

The first significant study of the dimension of formal fibers
was Matsumura's paper~\cite{matsumura88},
which gives a clear picture of the quantity's basic properties;
we briefly outline some of the most important results from that paper.
Following Matsumura, let $\alpha(A)$
denote the supremum of the dimensions of the formal fibers of $A$.
First, the dimension of the formal fibers decreases
weakly under inclusion, that is,
if $\p$ and $\fq$ are prime ideals of $A$ such that $\p\subseteq\fq$ then $\alpha(A,\p)\geq\alpha(A,\fq)$.
It follows that, if~$A$ is an integral domain, then $\alpha(A)=\alpha(A,(0))$.
Second, if~$A$ has positive Krull dimension~$n$
then its formal fibers have dimension at most~$n-1$,
that is, $\alpha(A)\leq\dim A-1$.
Third, for a large class of naturally occuring rings, the
rings of essentially finite type over a field,
the dimension of the formal fibers depends linearly on height
and decreases as height increases:
$\alpha(A,\p) = \dim A - \height \p - 1$ for $\p$ a nonmaximal prime ideal of $A$.

It is natural to ask whether Matsumura's formula
for~$\alpha(A,\p)$ can be extended to \emph{all} rings,
or at least, to a a class of rings that behave nicely
with respect to completion, such as excellent rings.
We should immediately exclude the trivial case $\alpha(A)=0$, where
every formal fiber has dimension zero.
Besides this trivial case, even if the formula does fail we might reasonably expect
that $\alpha(A)$ would decrease strictly as~$\height\p$ increases.
In particular, simplifying the question by looking at prime ideals of small height,
we might expect that $\alpha(A)$ rarely equals $\alpha(A,\p)$ when $\height\p=1$.

These considerations led Heinzer, Rotthaus, and Sally to informally pose the following question.
\begin{question}
Let A be an excellent local ring with $\alpha(A)>0$, and let $\Delta$ be the set $\{\p\in\spec A\mid\height\p=1\text{ and }\alpha(A,\p) = \alpha(A)\}$.
Is $\Delta$ a finite set?
\end{question}

Answering this question in the negative, Boocher, Daub, and Loepp constructed in~\cite{boocher10} an excellent local unique factorization domain (UFD) with uncountably many height one prime ideals for which $\Delta$ is countably infinite.
Nevertheless, one could still reasonably conjecture that \emph{most} height one prime ideals must lie outside~$\Delta$;
perhaps, for instance, $\Delta$ is at most countable.
It is our goal to disprove this weakened conjecture.
We will construct an excellent UFD~$A$ for which the set~$\Delta$
is uncountable and consists of {\em every} height one prime ideal of~$A$.  Fleming et al.\ show in a forthcoming paper that there exist UFDs 
for which $\Delta$ contains all height one prime ideals, 
but the rings they construct are not excellent.

Our first main result, the outcome of the construction, is Theorem~\ref{mainTheorem}.
We start with a complete equicharacteristic local UFD~$T$ of dimension 3 or greater
having the same cardinality as its residue field.
The theorem then guarantees the existence of a local subring~$A$ of~$T$
which has $T$ as its completion; whose formal fibers at prime ideals of height two or greater are singletons,
and hence have dimension zero;
for which we can prescribe the dimension of the generic formal fiber;
and for which we can prescribe the number of formal fibers
at height one prime ideals that have a given dimension.
In particular, we can force $\alpha(A,\p)$ to equal $\alpha(A,(0))$
for \emph{every} height one prime ideal~$\p$ of~$A$.

Our second main result is Theorem~\ref{excellence}, 
a strengthening of Theorem~\ref{mainTheorem} in which the constructed ring~$A$ is excellent.
For this theorem, we require the additional hypothesis that $T$ be regular and have characteristic zero.

Our method of construction is modeled on the construction in~\cite{heitmann93}:
we build the ring~$A$ by adjoining deliberately chosen elements of~$T$
to the prime subring of $T$.
The key idea, Lemma~\ref{machine}, 
is that the ring~$A$ will be Noetherian and have~$T$ as its completion
provided that the following two conditions hold:
first, the natural map $A\to T/\m^2$ is onto, 
and second, every finitely generated ideal of~$A$
is unchanged by extension to~$T$ followed by contraction to~$A$.
We can construct a ring satisfying the first condition
by starting with the prime subring of~$T$ and adjoining to it
a representative of every coset of~$T/\m^2$.
It turns out that the ring can be made to satisfy
the second condition by adjoining even more elements,
although this step is rather technical; see Lemma~\ref{closeone}.
Since $T/\m^2$ is infinite,
the adjunction process must be formalized using transfinite recursion.
That is, we construct an ascending chain of subrings of~$T$ indexed by ordinals
and terminating in~$A$.
The strength of this method rests in the abundance of viable elements to adjoin.
Broadly speaking, we can force the final ring~$A$ to have a certain property
by choosing the adjoined elements so that the property is created or conserved
during the construction of intermediate rings.
Prime avoidance lemmas function as the main technical tool for making these choices,
and to use these lemmas we must ensure that the intermediate rings 
are always smaller than $T/\m$.
We should caution that although our method is constructive,
it makes uncountably many arbitrary choices
and there is really no hope of explicitly describing the final ring~$A$.

The main ingredient of our construction is a novel combination of properties
and a series of simplifying assumptions
which together yield the ring~$A$ described above.
Before giving the admittedly complicated definition of a \emph{$J$-subring} in Section~3,
which encapsulates these ingredients, let us briefly explain and motivate them.

The height one prime ideals of~$A$ would be simplest to understand
if they were all principal, or equivalently, if $A$ were a UFD.
Since every ring whose completion is a UFD is itself a UFD,
we can ensure that $A$ is a UFD, and hence that its height one prime ideals are simple,
by stipulating that~$T$ be a UFD.
It would be even better if the height one prime ideals of~$A$
were generated by elements that were prime not only in~$A$, but also in~$T$.
We can force $A$ to have this property (which we call \ref{nice6}), that is,
that each of its prime elements be prime in~$T$,
by adjoining to each intermediate ring~$R$
the prime elements of~$T$ that appear as factors of elements of~$R$.
These simplifying assumptions make it much easier to control
the formal fibers of~$A$ in the intermediate rings
because they allow us to restrict 
our attention to the prime elements of~$T$.

To ensure that the formal fibers of~$A$ at its height one prime ideals are large enough,
whenever a generator~$p$ of a height one prime ideal~$pT$ of~$T$ first appears
in an intermediate ring~$R$ we choose a prime ideal~$\fq=\fq_{pT}$
of~$T$, having prescribed height, for which $\fq_{pT}\cap R=pR$.
We call the set of these chosen prime ideals~$\fq$ the \emph{distinguished set for $R$},
denoted by~$\cal Q_R$; the corresponding set of height one prime ideals of~$T$
is denoted by~$\cal F_R$.
To prescribe the height of each~$\fq\in\cal Q_R$,
we first index the elements of $\cal F_R$ by an ordinal number
using an \emph{ordering function}~$\varphi_R$,
and then specify the height of each~$\fq_{pT}$
using a \emph{height indicator} function~$\Lambda$.

The prime ideal~$\fq_{pT}$ will eventually become the maximal element
in the formal fiber of~$A$ at~$pA$.
To ensure that it lies in the fiber at all,
we ensure that $\fq_{pT}\cap R=pR$ holds for every intermediate ring~$R$,
and hence also for~$A$; this step requires prime avoidance lemmas
to carefully choose elements to adjoin.
In this way we get a lower bound of $\height(\fq_{pT}) - 1$
on the dimension of the formal fiber of $A$ at $pA$.
To make this lower bound an upper bound,
we adjoin generators of the prime ideals of~$T$ that are not contained
in any element of the distinguished set.
Consequently, the formal fibers of prime ideals of~$A$
that have height two or more will be singletons.

To control the dimension of the generic formal fiber,
we fix a prime ideal~$\frak P$ of~$T$
having prescribed height and choose the elements to adjoin
so that~$\frak P$ intersects each intermediate ring in the zero ideal.
As with the prime ideals of the distinguished set, we use
$\frak P$ to get a lower bound on the dimension
of the generic formal fiber.
The upper bound on dimension comes from adding nonzero elements of prime 
ideals of $T$ whose height is larger than the height of $\frak P$.  
As long as every prime ideal of~$\height\frak P + 1$ or greater
has nonzero intersection with an intermediate ring,
none of those prime ideals can be in the generic formal fiber.


The previous paragraphs describe the main aspects of the definition
of a $J$-subring. In addition to that definition, we found it convenient
to define an \emph{extension} of a $J$-subring,
two $J$-subrings $R\subseteq S$ satisfying certain compatibility conditions.
Specifically, the rings should have the same size unless $R$ is finite, a trivial case,
and the distinguished sets and ordering functions should be compatible
in the sense that $\cal Q_R\subseteq\cal Q_S$
and $\varphi_S|_{\cal F_R} = \varphi_R$.


In this paper, all rings are commutative with unity.  We say a ring is \emph{quasi-local} if it has exactly one maximal ideal
and \emph{local} if it is both quasi-local and Noetherian.  When we say $(T,\m)$ is a local ring, we mean that $T$ is a local ring with maximal ideal $\m$.
We denote the cardinality of a set $X$ by $|X|$,
the union of the elements of $X$ by $\bigcup X$,
the fraction field of an integral domain~$R$ by $R_{(0)}$,
and the set of prime ideals of a ring~$T$ having height~$k$ by $\spec_k T$.
Several constructions assume familiarity
with the basic properties of ordinal and cardinal numbers. 
Section~2 collects preliminary lemmas,
Section~3 builds the details of the construction,
Section~4 uses the construction to prove our first main result,
and Section~5 extends that result to excellent rings.

\section{Preliminary Lemmas}
The lemmas in this section fall into three broad classes.
First, Lemma~\ref{machine}, our main technical tool;
second, prime avoidance lemmas for choosing the elements to adjoin;
and third, a lemma concerning the cardinality of $T$ and $T/\p$ where $\p$ is a
nonmaximal prime ideal of $T$.

\begin{lemma}[\cite{heitmann94}*{Proposition 1}]
\label{machine}
Let $(A,\m\cap A)$ be a quasi-local subring of a complete local ring $(T,\m)$.
If $\fa T\cap A=\fa$ for every finitely generated ideal~$\fa$ of~$A$
and the map $A\rightarrow T\slash\m^2$ is onto
then $A$ is Noetherian and the natural homomorphism $\widehat A\rightarrow T$ is an isomorphism.
\end{lemma}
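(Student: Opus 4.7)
The plan is to prove both conclusions — that $A$ is Noetherian and that $\widehat A\cong T$ — by first showing that the $(\m\cap A)$-adic topology on $A$ agrees with the subspace topology inherited from the $\m$-adic topology on $T$, so that completing $A$ essentially reconstructs $T$. The surjection onto $T/\m^2$ will let us reproduce enough of $T$ inside $A$, while the contraction hypothesis $\fa T\cap A=\fa$ will provide the control needed for Noetherianness.

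I would first use the hypothesis that $A\to T/\m^2$ is onto, together with the Noetherianness of $T$, to produce $x_1,\dots,x_n\in\m\cap A$ whose images generate the finite-dimensional $T/\m$-vector space $\m/\m^2$. Since $T$ is complete local, Nakayama's lemma then gives $\m=(x_1,\dots,x_n)T$. A straightforward induction on $k$ shows $A\to T/\m^k$ is onto for every $k\geq 1$: given $t\in T$, lift its image modulo $\m$ to some $a_0\in A$, write $t-a_0=\sum_i x_i t_i$, lift each $t_i$ modulo $\m^{k-1}$ by the inductive hypothesis, and combine. Writing $\mathfrak{n}:=\m\cap A$, the contraction hypothesis applied to the finitely generated ideal $(x_1,\dots,x_n)A$ yields $\mathfrak{n}=(x_1,\dots,x_n)A$, a finitely generated ideal of $A$ satisfying $\mathfrak{n}T=\m$.

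From here, $\m^k=\mathfrak{n}^kT$ for every $k$, and the hypothesis applied to $\mathfrak{n}^k$ gives $\m^k\cap A=\mathfrak{n}^k$. Combined with the surjectivity just established, this yields compatible ring isomorphisms $A/\mathfrak{n}^k\cong T/\m^k$; passing to inverse limits produces the desired isomorphism $\widehat A\cong T$.

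Finally, to show $A$ is Noetherian, I would run the standard extension-contraction argument. Let $I\subseteq A$ be any ideal. Since $T$ is Noetherian, $IT$ is finitely generated, and by expanding any set of $T$-generators in terms of elements of $I$ we may assume the generators lie in $I$; write $IT=(a_1,\dots,a_m)T$ with $a_j\in I$. Setting $\fa=(a_1,\dots,a_m)A$, the contraction hypothesis gives $I\subseteq IT\cap A=\fa T\cap A=\fa\subseteq I$, so $I=\fa$ is finitely generated. The only genuine technical point in the whole argument is the initial bootstrap from the mod-$\m^2$ surjection to all higher powers of $\m$; once that is in hand, the contraction hypothesis carries the rest of the work cleanly.
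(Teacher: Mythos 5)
Your proof is correct and complete. The paper does not prove this lemma itself --- it is quoted verbatim from Heitmann (Proposition~1 of the cited reference) --- but your argument is essentially the standard one given there: bootstrap the surjection $A\to T/\m^2$ to surjections $A\to T/\m^k$ for all $k$ using a finite generating set $x_1,\dots,x_n$ of $\m$ drawn from $\m\cap A$, use the contraction hypothesis to identify $\m^k\cap A$ with $(\m\cap A)^k$ so that $A/(\m\cap A)^k\cong T/\m^k$ and hence $\widehat A\cong T$, and prove Noetherianness by the extension--contraction argument $I\subseteq IT\cap A=(a_1,\dots,a_m)A\subseteq I$. All the individual steps check out.
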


Our main technical tool for choosing elements of~$T$
is a pair of powerful prime avoidance lemmas for local rings, Lemmas~\ref{avoidancelem} and~\ref{unittrans}.
We will use the two lemmas to find elements satisfying transcendence conditions
related to the set of distinguished prime ideals of~$T$.
These transcendence conditions guarantee that the contractions
of elements of the distinguished set remain principal
as the intermediate ring grows.

\begin{lemma}[\cite{heitmann93}*{Lemma 3}]
\label{avoidancelem}
Let $(T,\m)$ be a local ring. Let $D\subset T$, let $C\subset\spec T$, 
and let $\fa$ be an ideal of~$T$ such that $\fa\not\subseteq\p$ for all $\p\in C$. 
If $|C\times D|<|T/\m|$ then
\[
\fa\not\subseteq\bigcup\{t+\p\mid\p\in C,t\in D\}.
\]
\end{lemma}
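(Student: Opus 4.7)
The plan is to construct an element of $\fa$ outside the forbidden union by transfinite recursion, using the cardinality inequality $|\fa/(\fa \cap \p)| \geq |T/\m|$ available for each $\p \in C$.

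Two preliminary observations guide the construction. First, for each $(t,\p)\in D\times C$ the intersection $\fa\cap(t+\p)$ is either empty or a single coset of $\fa\cap\p$ in $\fa$, since any two of its elements differ by an element of both $\fa$ and $\p$. Second, the bound $|\fa/(\fa\cap\p)|\geq|T/\m|$ follows from the injection $T/\p\hookrightarrow \fa/(\fa\cap\p)$ sending $\bar u\mapsto\overline{u y_\p}$ for some fixed $y_\p\in\fa\setminus\p$---a nonzerodivisor in the domain $T/\p$---composed with the surjection $T/\p\twoheadrightarrow T/\m$.

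Well-ordering $D\times C$ with order type $\mu := |C\times D|$, I would build a chain of surviving cosets $a_\alpha+\fa_\alpha\subseteq \fa$ such that $a_\alpha+\fa_\alpha$ misses $t_\beta+\p_\beta$ for every $\beta<\alpha$, with $\fa_\alpha$ an intersection of finitely many ideals of the form $\fa\cap\p_\beta$ for $\beta<\alpha$. At a successor stage $\alpha+1$, if $a_\alpha+\fa_\alpha$ already misses $t_\alpha+\p_\alpha$, I take $a_{\alpha+1}=a_\alpha$ and $\fa_{\alpha+1}=\fa_\alpha$; otherwise I set $\fa_{\alpha+1}=\fa_\alpha\cap\p_\alpha$ and choose $a_{\alpha+1}\in a_\alpha+\fa_\alpha$ outside the single offending coset of $\fa_{\alpha+1}$ in $\fa_\alpha$, which is possible because $|\fa_\alpha/\fa_{\alpha+1}|\geq|T/\m|>1$ whenever $\fa_\alpha\not\subseteq\p_\alpha$.

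The main obstacles arise at limit stages and in the degenerate case $\fa_\alpha\subseteq\p_\alpha$. At a limit ordinal $\lambda$, the intersection $\bigcap_{\alpha<\lambda}(a_\alpha+\fa_\alpha)$ must be nonempty for the recursion to continue, and in the degenerate case the refinement step fails outright. Both issues are delicate. I would address them by choosing the well-ordering of $D\times C$ adaptively---for instance, processing pairs that share a common prime together so that only finitely many distinct ideals $\fa_\alpha$ arise---and by exploiting the Noetherianity of $T$ to ensure that any descending chain of ideal intersections stabilizes, so the recursion reduces to a sequence of finitely many genuine refinements.
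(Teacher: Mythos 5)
The paper offers no proof of this lemma---it is quoted from Heitmann---so your argument must stand on its own, and the two obstacles you flag at the end are fatal; the remedies you propose do not work. First, a Noetherian ring satisfies the \emph{ascending} chain condition, not the descending one, so there is no reason for the chain $\fa_0\supseteq\fa_1\supseteq\cdots$ of intersections $\fa\cap\p_{\beta_1}\cap\cdots\cap\p_{\beta_k}$ to stabilize: when $C$ is infinite, finite intersections of distinct primes typically form a strictly descending chain, and grouping pairs by prime only bounds the number of refinements \emph{per prime}, not in total. Consequently the limit stages really occur, and there $\bigcap_{\alpha<\lambda}(a_\alpha+\fa_\alpha)$ can be empty, since $T$ is not assumed complete and the $\fa_\alpha$ need not be cofinal with powers of $\m$. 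Second, the degenerate case $\fa_\alpha\subseteq\p_\alpha$ is a genuine obstruction: the hypothesis only gives $\fa\not\subseteq\p_\alpha$, and after intersecting with other members of $C$ containment can certainly occur; neither an adaptive well-ordering nor Noetherianity rules this out. So the recursion cannot be carried through as described.

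Your two preliminary observations (each $\fa\cap(t+\p)$ is a single coset of $\fa\cap\p$, and $|\fa/(\fa\cap\p)|\geq|T/\p|\geq|T/\m|$) are the right raw material, but they should be deployed in a single counting step rather than a transfinite refinement. Fix a set $U\subseteq T$ of coset representatives for $T/\m$, so $|U|=|T/\m|$; since every $\p\in C$ lies in $\m$, distinct elements of $U$ remain distinct modulo every $\p\in C$. Write $\fa=(a_1,\dots,a_n)$ (using that $T$ is Noetherian) and induct on $n$: having chosen $x\in(a_1,\dots,a_{n-1})$ avoiding $t+\p$ for all pairs $(t,\p)$ with $a_n\in\p$ (for such $\p$ one has $(a_1,\dots,a_{n-1})\not\subseteq\p$, so the inductive hypothesis applies), consider the family $x+ua_n$ with $u\in U$. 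For a pair with $a_n\in\p$ no member of the family lies in $t+\p$, while for a pair with $a_n\notin\p$ at most one $u\in U$ does, since $(u-u')a_n\in\p$ forces $u\equiv u'\pmod{\p}$ in the domain $T/\p$. As $|C\times D|<|U|$, some $u$ survives every pair simultaneously, and $x+ua_n\in\fa$ is the desired element. This eliminates the limit stages and the degenerate case entirely.
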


\begin{lemma}[\cite{loepp97}*{Lemma 4}]
\label{unittrans} 
Let $(T,\m)$ be a local ring, with $\left\vert{T\slash\m}\right\vert$ infinite, and let $D\subset T$. Let $C\subset\spec T$ and $y,z\in T$ be such that $y,z\not\in\p$ for all $\p\in C$. If $\left\vert{C\times D}\right\vert<\left\vert{T\slash\m}\right\vert$, then there is a unit $u\in T^{\times}$ such that
\[
yu,zu^{-1}\not\in\bigcup\{t+\p\mid t\in D,\p\in C\}.
\]
\end{lemma}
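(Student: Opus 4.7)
My plan is to reduce the problem to a single application of Lemma~\ref{avoidancelem} by translating all three desired properties of $u$ into avoidance of finitely many cosets.

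The key translation step is to rewrite the condition $zu^{-1} \notin t + \p$ in a form linear in $u$. Since I am searching for a unit $u$, I may multiply through by $u$: because $\p$ is prime and $u$ is a unit, $u\p = \p$, and so $zu^{-1} - t \in \p$ if and only if $z - tu \in \p$. The problem then becomes: find a unit $u \in T$ such that $yu - t \notin \p$ and $tu - z \notin \p$ for every $(t, \p) \in D \times C$.

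Next I identify the bad cosets. Fix $(t, \p) \in D \times C$. Since $y \notin \p$ and $\p$ is prime, multiplication by $y$ is injective on $T/\p$, so if any $u_0 \in T$ satisfies $yu_0 \equiv t \pmod{\p}$ then the set of $u$ with $yu \equiv t \pmod{\p}$ is the single coset $u_0 + \p$. For the second condition, if $t \in \p$ then $tu \equiv z \pmod{\p}$ would force $z \in \p$, contradicting the hypothesis on $z$, so no bad $u$ arises; if $t \notin \p$ and some $u_1 \in T$ satisfies $tu_1 \equiv z \pmod \p$, then the bad set is the coset $u_1 + \p$. To force $u$ to be a unit I additionally exclude $u \in \m$, i.e., I avoid the coset $0 + \m$.

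To conclude, set $C' = C \cup \{\m\} \subset \spec T$ and let $D' \subset T$ consist of $0$ together with the chosen representatives $u_0(t, \p)$ and $u_1(t, \p)$. Standard cardinal arithmetic, together with the hypothesis $|C \times D| < |T/\m|$ and the fact that $|T/\m|$ is infinite, yields $|C' \times D'| < |T/\m|$. Applying Lemma~\ref{avoidancelem} with $\fa = T$, which is not contained in any proper prime, produces an element $u \in T$ lying outside $\bigcup\{d + \p : d \in D', \p \in C'\}$; such a $u$ is automatically a unit and satisfies both required conditions. The main obstacle is the opening translation: it is not immediately obvious that the condition involving $u^{-1}$ can be linearized in $u$, and the observation that $u\p = \p$ for a unit is the essential trick that turns an inverse into a product. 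After that step the problem is a routine prime-avoidance counting argument.
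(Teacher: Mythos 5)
Your proof is correct. The paper itself gives no argument for this lemma---it is quoted verbatim from \cite{loepp97}---and your reduction (linearizing the $u^{-1}$ condition via $u\p=\p$, noting each pair $(t,\p)$ contributes at most one bad coset for each of the two conditions because $y$, respectively $t$, acts injectively on the domain $T/\p$, throwing in $0+\m$ to force $u$ to be a unit, and then applying Lemma~\ref{avoidancelem} with $\fa = T$) is exactly the standard argument used in the cited source, with the cardinality bookkeeping handled correctly.
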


Because the constructed ring $A$ will have the cardinality of~$T$,
whereas the prime avoidance lemmas are suited to avoiding sets of cardinality $|T/\m|$ or less,
we will require that $|T|=|T/\m|$.
For the same reason it is necessary to constrain the growth
of the size of the intermediate rings, and in particular,
adjoining a single element will not increase the size, provided the ring is already infinite.

The final lemma of this section relates the cardinality of a complete local ring
to that of its quotients, and it shows that complete local rings are ``large."  Note that, a consequence of the lemma is that if $(T,\m)$ is a complete local ring of dimension at least one and $|T/\m| = |T|$, then $|T/\m| > \aleph_0$.

\begin{lemma}[{\cite[Lemma 2.3]{charters04}}]
\label{cardinalitymodp}
Let $(T,\m)$ be a complete local ring of dimension at least one and let $\p\in\spec T$ be a nonmaximal prime ideal of $T$. Then $\left\vert{T\slash\p}\right\vert=\left\vert{T}\right\vert\geq2^{\aleph_0}.$
\end{lemma}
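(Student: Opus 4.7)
The plan is to prove $|T/\p|\geq|T|$ and $|T/\p|\geq 2^{\aleph_0}$; the reverse inequality $|T/\p|\leq|T|$ is automatic. Set $D := T/\p$ and $k := T/\m$, so $D$ is a complete local Noetherian domain with residue field $k$ and $\dim D \geq 1$.

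For the lower bound $|D| \geq 2^{\aleph_0}$, the approach is direct construction. Since $\dim D \geq 1$, pick a nonzero element $x$ in the maximal ideal of $D$, and for each subset $S \subseteq \N$, define $s_S := \sum_{n \in S} x^n$; this series converges because $D$ is complete. To see $S \mapsto s_S$ is injective, suppose $S \neq S'$ and let $n_0$ be the least element of the symmetric difference. A short manipulation yields $s_S - s_{S'} = x^{n_0} u$ for some $u \in D$ whose image in $k$ is $\pm 1$, and hence $u$ is a unit. Because $D$ is a domain and $x \neq 0$, $x^{n_0}$ is nonzero, so $s_S \neq s_{S'}$. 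This exhibits $2^{\aleph_0}$ distinct elements of $D$.

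For $|D| \geq |T|$, the plan is to apply Cohen's structure theorems to both $T$ and $D$. Writing $T$ as a quotient of a power series ring $V[[X_1,\ldots,X_n]]$ over a coefficient ring $V$ with residue field $k$ gives $|T|\leq|V|^{\aleph_0}$, while the complete Noether normalization form of Cohen's theorem applied to the complete local domain $D$ of dimension $d \geq 1$ yields a subring $V'[[Y_1,\ldots,Y_d]] \subseteq D$ for some coefficient ring $V'$ with residue field $k$, giving $|D|\geq|V'|^{\aleph_0}$. The main obstacle is then showing $|V|^{\aleph_0} = |V'|^{\aleph_0}$, which requires a brief case analysis on whether each of $V$, $V'$ is a field or a complete DVR: in every case both exponentials collapse to $|k|^{\aleph_0}$ when $k$ is infinite and to $2^{\aleph_0}$ when $k$ is finite, so the potential mismatch between equicharacteristic and mixed characteristic coefficient rings disappears. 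Combining, $|T|\leq|V|^{\aleph_0}=|V'|^{\aleph_0}\leq|D|$, which finishes the proof.
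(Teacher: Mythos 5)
The paper does not prove this lemma---it is quoted verbatim from Charters--Loepp with a citation---so there is no in-paper argument to compare against; I will simply assess your proof, which I believe is correct. The subset-sum injection $S\mapsto\sum_{n\in S}x^n$ is sound: convergence follows from $x^n\in\m_D^n$ and completeness, and the factorization $s_S-s_{S'}=x^{n_0}u$ with $u\equiv\pm1\pmod{\m_D}$ does give injectivity in the domain $D$. The Cohen-structure bookkeeping also works, with two small remarks. First, in mixed characteristic the regular subring supplied by Cohen's normalization is $V'[[Y_1,\dots,Y_{d-1}]]$ (a $d$-dimensional ring, since the DVR $V'$ contributes one to the dimension), not $V'[[Y_1,\dots,Y_d]]$; this is harmless because $|V'|=|k|^{\aleph_0}$ for a complete DVR with residue field $k$, so $|V'[[Y_1,\dots,Y_{d-1}]]|=|V'|^{\aleph_0}$ even when $d-1=0$, and your inequality $|D|\geq|V'|^{\aleph_0}$ survives. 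Second, your case analysis showing $|V|^{\aleph_0}=|V'|^{\aleph_0}=\max(|k|,2)^{\aleph_0}$ is exactly right, but note that it already yields $|T/\p|\geq|V'|^{\aleph_0}\geq 2^{\aleph_0}$, so the separate power-series construction in your first step is logically redundant (though it is a nice elementary argument, and a slight upgrade of it---indexing the sums by arbitrary functions $\mathbb{N}\to k$ rather than subsets of $\mathbb{N}$---would give $|T/\p|\geq|k|^{\aleph_0}$ directly and let you dispense with Cohen normalization for the lower bound altogether, keeping only the inverse-limit bound $|T|\leq\max(|k|,\aleph_0)^{\aleph_0}$ for the upper one).
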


\section{The Construction}

Given a subring~$R$ of a complete local UFD~$T$, let
\[
\cal F_R=\big\{pT\in\spec_1 T\mid pu\in R\text{ for some unit }u\in T\big\}.
\]
For simplicity and without loss of generality
we will subscribe to the notational convention
that the generator~$p$ of a prime ideal~$pT\in\cal F_R$
is contained in~$R$.
That is, when we write $pT\in\cal F_R$ we are choosing $p$ so that $p\in R$.
We could just as well have defined $\cal F_R$ to be a set
of prime elements, but the current definition makes several arguments less verbose.

\begin{definition} Let $(T,\m)$ be a complete local UFD with $3\leq\dim T$,
let $(R,\m\cap R)$ be a quasi-local subring of~$T$ and let
$\mathfrak{P}$ be a nonmaximal prime ideal of~$T$.
Suppose $\Lambda:|T|\to\{1,2,\dots,\min(\height\frak P + 1,\dim T-1)\}$
is a function defined on the cardinal number~$|T|$,
and let $\varphi_R:\cal F_R \to\delta$
be a bijection from $\cal F_R$ to an initial segment~$\delta$ of~$|T|$.  Suppose
that the following conditions are satisfied.

\begin{enumerate}[label=(J.\arabic*)]
	\item\label{nice1} $|R|<|T/\m|$;
    \item\label{nice2} $\mathfrak{P}\cap R=(0)$;
    \item\label{nice3} for every $a\in R$, $aT\cap R=aR$;
    \item\label{nice4} there exists a set $\cal Q_R\subset\bigcup\{\spec_{n} T\mid1\leq n\leq\min(\height\mathfrak{P}+1,\dim T-1)\}$ such that the sets $\cal F_R$ and $\cal Q_R$ are in bijection and the ideal $\fq_{pT}\in\cal Q_R$ corresponding to $pT\in \cal F_R$ is such that
    \begin{enumerate}[label=(J.4\alph*)]
    	\item\label{nice4a} $pT\subseteq\fq_{pT}$,
        \item\label{nice4b} the image of $\fq_{pT}$ in $T/pT$ is in the regular locus of $T/pT$, 
        \item\label{nice4c} $\fq_{pT}\cap R=pT\cap R$, and
         \item\label{nice5} for every $\fq_{pT}\in\cal Q_R$, $\height(\fq_{pT}) = \Lambda(\varphi_R(pT))$.
    \end{enumerate}
\end{enumerate}
Then the $4$-tuple $(R,\frak P,\Lambda,\varphi_R)$ is called a \emph{$J$-subring of~$T$}.  Any set $\cal Q_R$ satisfying \ref{nice4} is called a
\emph{distinguished set for $R$}. 
The function~$\Lambda$ is called the \emph{height indicator} and
the function~$\varphi_R$ is called the \emph{ordering function for $R$}.
If $(R,\frak P,\Lambda,\varphi_R)$ is a $J$-subring of $T$ and $\cal Q_R$ is a fixed distinguished set for $R$, we say that  $(R,\frak P,\Lambda,\varphi_R)$ is a $J$-subring of $T$ with distinguished set $\cal Q_R$. If, in addition, $\frak P$, $\Lambda$, and $\varphi_R$ 
are understood, we will say that  $R$ is a $J$-subring with distinguished set $\cal Q_R$.  If $\cal Q_R$ is also understood, we will say that $R$ is a $J$-subring.
\end{definition}

\begin{remark} 
It follows from conditions~\ref{nice3} and~\ref{nice4c} that $\fq_{pT}\cap R=pT\cap R=pR$.
Note also that no $\fq_{pT}\in\cal Q_R$ is contained in $\frak P$.
\end{remark}

The cardinality condition \ref{nice1} is needed to invoke prime-avoidance lemmas,
and thus allows us to adjoin elements to~$R$ without adding unwanted
elements of~$\mathfrak{P}$ or of prime ideals contained in~$\cal Q_R$.
Although our final ring $A$ will not satisfy~\ref{nice1},
we would like it to satisfy the other properties of a $J$-subring.
\ref{nice2} will ensure that $\mathfrak{P}$ is in the generic formal fiber of $A$.
\ref{nice3} is needed 
to show that the completion of $A$ is $T$.
By maintaining \ref{nice4}, we will ensure that
the elements of $\cal Q_R$ are precisely the maximal ideals of the formal fiber rings of $A$ at its
height one prime ideals;
\ref{nice4b} gives us that these are regular local rings.
The ordering function $\varphi_R$ implicitly identitifes each element~$pT$ of~$\cal F_R$
with an ordinal less than~$|T|$, and the height indicator~$\Lambda$ 
uses this ordinal to specify the height of~$\fq_{pT}$;
this is the content of \ref{nice5}.
We may freely choose the heights of the distinguished prime ideals
as long as each height is strictly less than
that of the maximal ideal and
the dimension of the generic formal fiber is
no less than that of any other fiber;
these two conditions explain the upper bound $\min(\height\frak P + 1,\dim T - 1)$
on the heights of distinguished prime ideals.

\begin{definition}\label{jext}
Let $(T,\m)$ be a complete local UFD with $3\leq\dim T$,
let $(R, \frak P,\Lambda,\varphi_R)$ be a $J$-subring of~$T$ with distinguished set $\cal Q_R$,
and let $(S, \frak P, \Lambda,\varphi_S)$ be 
a $J$-subring of~$T$ with distinguished set $\cal Q_S$ and suppose that $R \subseteq S$.

The ring~$S$ is called a \emph{$J$-extension} of~$R$ if
the following conditions are satisfied:
\begin{enumerate}
\item\label{jext1}
if the ring~$R$ is infinite then~$|S|=|R|$, and
\item\label{jext2}
the distinguished sets and ordering functions are compatible in the sense that
\begin{enumerate}
\item\label{jext2a}
$\cal Q_R\subseteq\cal Q_S$, and
\item\label{jext2b}
$\varphi_R(pT) = \varphi_S(pT)$ for every $pT\in\cal F_R$.
\end{enumerate}
\end{enumerate}
\end{definition}

Condition~\eqref{jext1} ensures that the cardinality
of the rings does not grow too much, and is used only
in the proof of Lemma~\ref{unions}.
Every $J$-subring in this paper will be infinite
except the prime subring of~$T$ in the case when~$T$ has nonzero characteristic.

\begin{lemma}\label{unions}
Let $(T,\m)$ be a complete local UFD with $3\leq\dim T$, 
let $\delta$ be an ordinal number, and
let $(R_\beta, \frak P, \Lambda, \varphi_{R_{\beta}})_{\beta<\delta}$
be a sequence of $J$-subrings of~$T$
indexed by~$\delta$.  Let $\cal Q_{R_{\beta}}$ denote a distinguished set for $R_{\beta}$.
Suppose that 
$R_0$ is infinite; that if
 $\beta=\gamma+1$ is a successor ordinal then $R_\gamma$ is a $J$-extension
of~$R_\beta$;
that if $\beta$ is a limit ordinal then $R_\beta=\bigcup_{\gamma<\beta} R_\gamma$ and
that the distinguished set of~$R_\beta$ is $\cal Q_{R_\beta}=\bigcup_{\gamma<\beta}\cal Q_{R_\gamma}$;
and that the ordering function~$\varphi_{R_\beta}$ is such that
$\varphi_{R_\beta}(pT) = \varphi_{R_\gamma}(pT)$ for every $\gamma < \beta$
and every $pT\in \cal F_{R_\gamma}$.
Define the ring $S$ by $S=\bigcup_{\beta<\delta} R_\beta$.

Then $|S|\leq\max(|R_0|,|\delta|)$;
there is an ordering function $\varphi_S$ such that $(S, \frak P, \Lambda, \varphi_S)$ satisfies every condition of a $J$-subring except perhaps the cardinality condition~\ref{nice1};
and $S$ satisfies condition~\eqref{jext2} of a $J$-extension of~$R_0$. 
\end{lemma}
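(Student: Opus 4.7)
The plan is to take unions termwise and verify each $J$-subring condition by reducing it to a single finite stage of the chain. Concretely, I would set $\cal Q_S := \bigcup_{\beta<\delta}\cal Q_{R_\beta}$ and, for $pT\in\cal F_S=\bigcup_{\beta<\delta}\cal F_{R_\beta}$, define $\varphi_S(pT) := \varphi_{R_\beta}(pT)$ for any $\beta$ with $pT\in\cal F_{R_\beta}$. The hypothesized compatibility $\varphi_{R_\beta}(pT)=\varphi_{R_\gamma}(pT)$ whenever $\gamma<\beta$ and $pT\in\cal F_{R_\gamma}$ makes this well-defined; the image of $\varphi_S$ is the union of the initial segments $\image(\varphi_{R_\beta})$ of $|T|$, hence itself an initial segment of $|T|$; and bijectivity follows from injectivity at each stage.

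For the cardinality bound, I would prove by transfinite induction that $|R_\beta|\le\max(|R_0|,|\beta|)$ for every $\beta<\delta$. The successor case uses condition \eqref{jext1} of a $J$-extension, applicable because each $R_\beta$ contains the infinite ring $R_0$; the limit case uses the standard estimate on a union indexed by an ordinal. Summing over $\beta<\delta$ then yields $|S|\le\max(|R_0|,|\delta|)$.

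Verification of the remaining $J$-subring conditions for $S$ is then a matter of reducing each claim to a single $R_\beta$ where the relevant data are visible. Quasi-locality holds because $s\in S\setminus\m$ lies in some $R_\beta$ as a non-$\m$ element, hence has its inverse in $R_\beta\subseteq S$. Condition \ref{nice2} collects the equalities $\frak P\cap R_\beta=(0)$. For \ref{nice3}, given $a\in S$ and $x\in aT\cap S$, I would choose $\beta$ so that $a,x\in R_\beta$ and apply \ref{nice3} for $R_\beta$ to conclude $x\in aR_\beta\subseteq aS$; condition \ref{nice4c} uses the same ``large enough $\beta$'' trick. Conditions \ref{nice4a}, \ref{nice4b}, and \ref{nice5} descend termwise from each $R_\beta$, with \ref{nice5} using the compatibility $\varphi_S(pT)=\varphi_{R_\beta}(pT)$. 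The $J$-extension conditions \eqref{jext2a} and \eqref{jext2b} of $S$ over $R_0$ are immediate from the definitions of $\cal Q_S$ and $\varphi_S$.

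The technical point that would take the most care is showing that the assignment $pT\mapsto\fq_{pT}^{R_\beta}$ giving the bijection $\cal F_S\to\cal Q_S$ is independent of $\beta$, since condition \ref{nice5} pins down only the height $\Lambda(\varphi_S(pT))$ of the assigned ideal, not the ideal itself. I would read the inclusion $\cal Q_R\subseteq\cal Q_S$ in the definition of a $J$-extension as implicitly requiring that the bijection for $S$ restrict to the one for $R$ on $\cal F_R$; under this convention, the assignments along the chain agree and the induced bijection on $S$ is forced, completing the construction of the distinguished set for $S$.
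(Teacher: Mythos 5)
Your proposal is correct and follows essentially the same route as the paper: take $\cal Q_S$ and $\varphi_S$ as termwise unions, reduce each $J$-subring condition to a single large-enough stage $R_\beta$, and bound $|S|$ by the transfinite induction $|R_\beta|\le\max(|R_0|,|\beta|)$. The coherence issue you flag at the end is real, and the paper resolves it exactly as you propose, by invoking the compatibility $\cal Q_{R_\beta}\subseteq\cal Q_{R_\gamma}$ for $\beta\le\gamma$ built into the notion of a $J$-extension.
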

\begin{proof}
The ring $S$ satisfies~\ref{nice2} since, if $r\in\frak P\cap S$,
then $r\in\frak P\cap R_\beta$ for some $\beta$ and $\frak P \cap R_{\beta} = (0)$.
For~\ref{nice3}, suppose that $a \in S$ and $x \in aT \cap S$.  
There is an $R_\beta$ containing $a$ and $x$,
and since $R_{\beta}$ is a $J$-subring, $aT \cap R_{\beta} = aR_{\beta}$.  
So $x \in aT \cap R_{\beta} = aR_{\beta} \subseteq aS$, and \ref{nice3} follows.

For~\ref{nice4},
note that $\cal F_S=\bigcup_{\beta<\delta}\cal F_{R_\beta}$.  We claim $\cal Q_S = \bigcup_{\beta<\delta} \cal Q_{R_{\beta}}$ is a distinguished set for~$S$.
Conditions~\ref{nice4a}
and~\ref{nice4b} are trivially
satisfied for each element of~$\cal Q_S$.  
Condition~\ref{nice4c} holds because
if $\fq_{pT}\in\cal Q_S$ then
$\fq_{pT}\in\cal Q_{R_\beta}$ for some~$\beta$ and $\fq_{pT}\cap R_{\gamma}=pT\cap R_{\gamma}$ for every $\gamma\geq\beta$;
we may then take the union over~$\gamma$
of each side of these equations to conclude that $\fq_{pT}\cap S=pT\cap S$.
Here we use the coherence condition that $\cal Q_{R_\beta}\subseteq\cal Q_{R_\gamma}$
whenever $\beta\leq\gamma$.

For \ref{nice5}, define the ordering function~$\varphi_S$
on~$S$ by setting $\varphi_S(pT) = \varphi_{R_\beta}(pT)$
for $pT\in\cal F_{R_\beta}$.
It is then obvious that \ref{nice5} and~\eqref{jext2} both hold.

To verify the cardinality inequality, 
we'll prove by transfinite induction the stronger statement that for all~$\beta$,
$|R_\beta|\leq\max(|R_0|,|\beta|)$.
The statement holds trivially for~$\beta=0$.
If $\beta=\gamma+1$ is a successor ordinal
then $|R_\beta| = |R_\gamma|$ because $R_\gamma \subseteq R_\beta$ is a $J$-extension,
and therefore $|R_\beta| \leq \max(|R_0|,|\gamma|) = \max(|R_0|,|\beta|)$.
If $\beta$ is a limit ordinal then
\[
|R_\beta| \leq\sum_{\gamma<\beta}|R_\gamma|
\leq |\beta| \sup_{\gamma<\beta}|R_{\gamma}|
\leq |\beta| \sup_{\gamma<\beta}(\max(|R_0|,|\gamma|))
\leq \max(|R_0|,|\beta|).
\]
It follows that $|S|\leq\max(|R_0|,|\delta|)$.
\end{proof}

Under the assumptions of Lemma~\ref{unions},
if $|R_0|<|T/\m|$ and $|\delta|<|T/\m|$
then $S$ is a $J$-subring of~$T$.
When we apply the lemma
these two inequalities will always hold,
except in the proof of Theorem~\ref{mainTheorem}.

Because many of our lemmas share hypotheses,
we will state them as an assumption
so that they can be concisely referenced.

\begin{assumption}\label{ass1}
The local ring $(T,\m)$ is a complete UFD with $3\leq\dim T$ and $|T/\m|>\aleph_0$, 
$\mathfrak{P}$ is a nonmaximal prime ideal of $T$,
and $(R,\mathfrak{P},\Lambda,\varphi_R)$ is a $J$-subring of~$T$ with distinguished set $\cal Q_R$.
\end{assumption}

Looking at the definition of $J$-subring, it is not immediately clear that finding a set $\cal{Q}_R$ satisfying the conditions of  \ref{nice4} will be possible.  We will employ the next lemma to select the prime ideals $\fq_{pT}$ for $\mathcal{Q}_R$. Note that the conditions in Lemma~\ref{pickq} are precisely the first three conditions of \ref{nice4}.

\begin{lemma}\label{pickq} 
Under Assumption~\ref{ass1}, let $(S,S \cap \m)$ be a quasi-local subring of $T$ such that 
$|S| < |T/\m|$. Let $n$ be an integer with $1\leq n\leq \dim T-1$.
If $pT$ is a prime ideal of $T$
then there exists $\fq_{pT}\in\spec_n T$ satisfying the following:
\begin{enumerate}
\item $pT\subseteq\fq_{pT}$,
\item the image of $\fq_{pT}$ in $T\slash pT$ is in the regular locus of $T\slash pT$, and
\item $\fq_{pT}\cap S=pT\cap S$.
\end{enumerate}
\end{lemma}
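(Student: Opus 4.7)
My plan is to reduce modulo $pT$ and then build $\fq_{pT}$ as the top of a chain of primes constructed by induction on height. Set $\overline{T} = T/pT$, which is a complete local domain of dimension $\dim T - 1 \geq 2$ whose residue field is still $T/\m$, and let $\overline{S}$ denote the image of $S$. The task reduces to finding a prime $\overline{\fq} \in \spec_{n-1}\overline{T}$ that lies in the regular locus of $\overline{T}$ and satisfies $\overline{\fq} \cap \overline{S} = (0)$; then $\fq_{pT}$ is taken to be its preimage in $T$, and the three required conditions follow (the third from noting that $s \in S$ lies in $\fq_{pT}$ if and only if its image $\overline{s} \in \overline{\fq} \cap \overline{S} = (0)$, i.e., $s \in pT$). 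If $n = 1$, take $\overline{\fq} = (0)$, which lies in the regular locus because $\overline{T}_{(0)}$ is a field.

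For $n \geq 2$, I would construct a strictly increasing chain $(0) = \overline{\fq}_0 \subsetneq \overline{\fq}_1 \subsetneq \cdots \subsetneq \overline{\fq}_{n-1}$ in $\overline{T}$ with each $\overline{\fq}_i$ of height $i$, in the regular locus of $\overline{T}$, and satisfying $\overline{\fq}_i \cap \overline{S} = (0)$. Because $\overline{T}$ is excellent, its singular locus equals $V(J)$ for some nonzero ideal $J$, and the regularity condition on $\overline{\fq}_i$ becomes $J \not\subseteq \overline{\fq}_i$. For the step $i \to i+1$, pass to $U = \overline{T}/\overline{\fq}_i$, a complete local domain of dimension $\dim T - 1 - i \geq 2$ (since $i \leq n - 2 \leq \dim T - 3$), and seek a height-$1$ prime $\overline{\overline{\fq}}$ of $U$ that does not contain the image $\overline{J}$ of $J$ (nonzero by the inductive hypothesis) and that avoids every nonzero element of the image $\overline{S}_i$ of $\overline{S}$. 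Its preimage $\overline{\fq}_{i+1}$ in $\overline{T}$ will then have height $i+1$, lie in the regular locus, and meet $\overline{S}$ only in $(0)$.

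The heart of the argument is the cardinality comparison guaranteeing such an $\overline{\overline{\fq}}$ exists. On one hand, the set of height-$1$ primes of $U$ has cardinality at least $|T/\m|$: if $x, y$ are part of a system of parameters of $U$ and $\{c_\alpha : \alpha \in T/\m\}$ is a set of lifts to $U$ of the residue field elements, then each $(x + c_\alpha y)$ is a nonzero principal ideal in $\m_U$ and so has only height-$1$ minimal primes by Krull's Hauptidealsatz; for $\alpha \neq \beta$, any prime containing both $x + c_\alpha y$ and $x + c_\beta y$ would contain $(c_\alpha - c_\beta)y$, hence $y$ (since $c_\alpha - c_\beta$ is a unit), and then $x$, contradicting $\height(x,y) = 2$, so the minimal primes for distinct $\alpha$ are pairwise disjoint. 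On the other hand, the height-$1$ primes of $U$ containing $\overline{J}$ or containing some nonzero $\overline{\overline{s}} \in \overline{S}_i$ lie, in each case, among the finitely many minimal primes of the respective ideal; their total number is thus at most $\aleph_0 + |S|\cdot\aleph_0 < |T/\m|$, using $|S| < |T/\m|$ and $|T/\m| > \aleph_0$ from Assumption~\ref{ass1}. A good $\overline{\overline{\fq}}$ therefore exists, and the induction closes.

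The main obstacle is the lower bound on the number of height-$1$ primes in the intermediate quotient $U$, together with the bookkeeping that translates the regularity of $\overline{\fq}_{i+1}$ in $\overline{T}$ into the manageable condition $\overline{J} \not\subseteq \overline{\overline{\fq}}$ in $U$; once these pieces are in place, the inductive step is routine and the resulting chain produces $\fq_{pT}$ with the three required properties.
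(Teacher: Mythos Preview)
Your proof is correct and follows essentially the same approach as the paper: induct on the height, and at each step pass to the quotient by the current prime and pick a height-one prime avoiding the singular locus and every nonzero element coming from~$S$, using a cardinality comparison. The only cosmetic differences are that the paper invokes Lemma~\ref{avoidancelem} rather than your direct $x + c_\alpha y$ construction to see that the quotient has at least $|T/\m|$ height-one primes, and it avoids a single generator of the defining ideal~$J$ of the singular locus rather than all of~$\overline{J}$.
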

\begin{proof}

We show the lemma by induction on $n$. If $n=1$, then $\fq_{pT}=pT$ satisfies the required conditions. 

Now let $1<n\leq \dim T-1$,
and suppose that $\fq\in\spec_{n-1} T$ has the desired properties.
Then $\fq\cap S=pT\cap S$ and so the map
\[
\frac{S}{pT\cap S}\hookrightarrow\frac{T}{\fq}
\]
is an inclusion. 
If $\overline{r}\in S/pT\cap S$ is a nonzero element,
then every height one prime ideal of $T/\fq$ containing $\overline{r}$
is an associated prime ideal of $\overline{r}(T\slash \fq)$.
Thus, since $T/\fq$ is Noetherian,
the set of its height one prime ideals with nonzero intersection with $S/pT\cap S$ is finite if $S/pT \cap S$ is finite, and
has cardinality equal to $|S\slash pT\cap S|\leq|S|<|T/\m|$ otherwise.  Note that $T$ is excellent, and so it is catenary, and it follows that the dimension of $T/\fq$ is at least two.  We next use Lemma \ref{avoidancelem}, on the local ring $(T/\fq, \m/\fq)$ to show that the number of height one prime ideals of $T/\fq$ is at least $|T/\m|$.  Suppose, on the contrary, that the number of height one prime ideals of $T/\fq$ is less than $|T/\m|$, and note that $\m/\fq \not\subset \p/\fq$ for all height one prime ideals $\p/\fq$ of $T/\fq$.  Now let $C = \{\p/\fq \, | \, \p/\fq \mbox{ is a height one prime ideal of } T/\fq\}$, and let $D = \{0 + \fq\}$.  Then we have that $|C \times D| < |T/\m|$, and so by Lemma \ref{avoidancelem}, $\m/\fq \not\subseteq \bigcup \{\p/\fq \, | \, \p/\fq \in C \}$.  But if $x + \fq \in \m/\fq$, then $x + \fq$ is contained in some height one prime ideal of $T/\fq$, a contradiction.  It follows that the number of height one prime ideals of $T/\fq$ is at least $|T/\m|$.
So the set of height one prime ideals of $T/\fq$ whose intersection with $S/pT \cap S$ is the zero ideal is at least $|T/\m|$.
Since $T/pT$ is complete, it is excellent, and it follows that the singular locus of $T/pT$ is closed.  As $T/pT$ is an integral domain, the singular locus is $V(J)$ for some nonzero ideal $J$ of $T/pT$.  Let $J = (g_1,g_2, \ldots, g_t)$.  Letting $\overline{\fq}$ denote the image of $\fq$ in $T/pT$, we have that $\overline{\fq}$ is in the regular locus of $T/pT$, and so $J \not\subset \overline{\fq}$.  Hence, there is a generator of $J$ whose image in $T/\fq$ is not zero.  Without loss of generality, suppose that the image of $g_1$, denoted by $\overline{g_1}$, is not zero in $T/\fq$.  Now choose a height one prime ideal $\overline{\fq_{pT}}$ of $T/\fq$ such that
$\overline{g_1} \not\in \overline{\fq_{pT}}$ and such that the intersection of $\overline{\fq_{pT}}$ with $S/pT \cap S$ is the zero ideal.
As $T$ is excellent, it is catenary, and so $\overline{\fq_{pT}}$ lifts to a prime ideal $\fq_{pT}$ of $T$ of height $n$, and by the way we chose $\overline{\fq_{pT}}$, we have $\fq_{pT} \cap S = pT \cap S$.  If the image of $\fq_{pT}$ in $T/pT$ is in the singular locus of $T/pT$, then it contains $J$, and so, in particular, it contains $g_1$.  But this would imply that $\overline{g_1} \in \overline{\fq_{pT}}$, a contradiction.  It follows that the image of $\fq_{pT}$ in $T/pT$ is in the regular locus of $T/pT$.
\end{proof}


The next lemma allows us to adjoin a transcendental element
to a $J$-subring of $T$
and find a $J$-extension
maintaining the properties that we desire.
Because we repeatedly need to adjoin transcendental elements to satisfy different aspects of our construction, this lemma will be used often in proofs of later lemmas.

\begin{lemma} \label{transextension} 
Under Assumption~\ref{ass1}, 
suppose that $x\in T$ is such that $x+\fq \in T/\fq$ is transcendental over $R\slash\fq\cap R$ for all $\fq\in\cal{Q}_R\cup\{\mathfrak{P}\}$.
Then there exists an infinite $J$-extension~$S$ of~$R$ that contains~$x$.
\end{lemma}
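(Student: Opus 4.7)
The plan is to set $S := R[x]_{(\m \cap R[x])}$ and verify that this localization is an infinite $J$-extension of $R$, with distinguished set and ordering function extending those of $R$.

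First I would verify the structural and cardinality conditions. Since $S$ is a localization of $R[x]$ at the contraction of $\m$, it is a quasi-local subring of $T$ with maximal ideal $\m \cap S$. Because $x + \mathfrak{P}$ is transcendental over $R/(\mathfrak{P} \cap R) = R$, the element $x$ is transcendental over $R$ itself, so $R[x]$ is isomorphic to the polynomial ring $R[X]$. This gives $|S| \leq \max(|R|, \aleph_0) < |T/\m|$, establishing \ref{nice1}, with $|S| = |R|$ when $R$ is infinite; when $R$ is finite, $R[x]$ is already infinite, so $S$ is infinite in either case.

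Next I would verify the ideal-intersection conditions for the existing distinguished primes using the transcendence hypothesis. Any $f(x) \in \mathfrak{P} \cap R[x]$ gives $f(x + \mathfrak{P}) = 0$ in $T/\mathfrak{P}$, and transcendence of $x + \mathfrak{P}$ over $R$ forces $f = 0$, so $\mathfrak{P} \cap R[x] = (0)$, hence $\mathfrak{P} \cap S = (0)$, giving \ref{nice2}. An analogous argument using transcendence of $x + \fq$ over $R/pR$ shows that $\fq \cap R[x] = pR[x]$ for each $\fq = \fq_{pT} \in \cal Q_R$, so $\fq \cap S = pS = pT \cap S$; hence \ref{nice4c} holds for the old primes, while \ref{nice4a} and \ref{nice4b} are inherited and \ref{nice5} is preserved by setting $\varphi_S|_{\cal F_R} = \varphi_R$.

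For primes in $\cal F_S \setminus \cal F_R$, which can appear because $x$ and combinations involving $x$ may pick up new prime factors in the UFD $T$, I would well-order these new primes and extend $\varphi_R$ to a bijection $\varphi_S \colon \cal F_S \to \delta'$ onto a larger initial segment of $|T|$; this is possible because $|\cal F_S \setminus \cal F_R| \leq |S| < |T/\m| \leq |T|$. For each new prime $pT$, set $n = \Lambda(\varphi_S(pT))$ and apply Lemma~\ref{pickq} to the quasi-local subring $S$ to obtain a prime ideal $\fq_{pT} \in \spec_n T$ satisfying \ref{nice4a}--\ref{nice4c}. Taking $\cal Q_S := \cal Q_R \cup \{\fq_{pT} : pT \in \cal F_S \setminus \cal F_R\}$ then verifies \ref{nice4} and \ref{nice5}, and the $J$-extension conditions \ref{jext2a} and \ref{jext2b} hold by construction.

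The main obstacle is condition \ref{nice3}, that $aT \cap S = aS$ for every $a \in S$. After a localization reduction, this becomes the claim that for every nonzero $b \in R[x]$ and every $t \in T$ with $bt \in R[x]$, one has $t \in S$. The approach is to factor $b$ in the UFD $T$ and handle each prime factor $p$ of $b$ separately: when $pT \in \cal F_R$, the transcendence of $x$ modulo $\fq_{pT}$ implies transcendence of $x$ modulo $pT$ (since $pT \subseteq \fq_{pT}$), which allows one to argue inductively on the $p$-adic order that the coefficients of $bt$, viewed as a polynomial in $x$ with coefficients in $R$, lie in the appropriate power of $p$ in $R$ by repeatedly invoking \ref{nice3} for $R$. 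Combining this information across the prime factors of $b$ and choosing denominators carefully so that they land outside $\m \cap R[x]$ will recover $t$ as an element of $S$; this careful bookkeeping is the technical heart of the proof.
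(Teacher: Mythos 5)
Your choice of $S$ is where the argument breaks down. The paper takes $S=R[x]_{(0)}\cap T$, i.e.\ \emph{all} elements of $T$ expressible as a ratio of elements of $R[x]$, precisely so that condition \ref{nice3} is immediate: if $c\in aT\cap S$ then $b=c/a$ lies in $S_{(0)}\cap T=R[x]_{(0)}\cap T=S$, so $c\in aS$. By instead taking the localization $S'=R[x]_{(\m\cap R[x])}$, whose elements are only those ratios $f/g$ with $g\notin\m$ (equivalently, $g$ a unit of $T$), you have $S'_{(0)}\cap T=R[x]_{(0)}\cap T$, which may be strictly larger than $S'$; the one-line argument is unavailable, and \ref{nice3} for $S'$ becomes exactly the assertion that $R[x]_{(0)}\cap T\subseteq R[x]_{(\m\cap R[x])}$. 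Your sketch does not establish this. You propose to factor the denominator $b\in R[x]$ into primes of $T$ and treat each factor $p$ with $pT\in\cal F_R$, but a typical element of $R[x]$ of positive degree (say $r_0+r_1x$) factors in $T$ into primes that lie in neither $R$ nor $R[x]$ and are unrelated to $\cal F_R$ or $\cal Q_R$; for such a factor $\pi$ you have no control over $\pi T\cap R[x]$, and the transcendence hypotheses say nothing about it. This uncontrolled behavior of "invisible" prime factors is exactly the phenomenon the paper must later fight with property \ref{nice6} and Lemma~\ref{ideals}, so it cannot be waved away as bookkeeping. In short, the technical heart of the proof is the part you have omitted, and it is not clear that the statement you would need is even true for $S'$.

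The rest of your verifications are fine and, for your smaller ring, actually easier than in the paper: since denominators in $S'$ are units of $T$, the equalities $\mathfrak{P}\cap S'=(0)$ and $\fq_{pT}\cap S'=pT\cap S'$ follow directly from $\mathfrak{P}\cap R[x]=(0)$ and $\fq_{pT}\cap R[x]=pR[x]$, whereas the paper must divide out common powers of $p$ from numerator and denominator and invoke unique factorization to handle non-unit denominators. The fix is simply to replace your $S'$ by $R[x]_{(0)}\cap T$; you then inherit the easy proof of \ref{nice3} at the cost of redoing \ref{nice2} and \ref{nice4c} with the paper's slightly more delicate fraction argument, and the remainder of your proposal (extending $\varphi_R$, invoking Lemma~\ref{pickq} for the new primes in $\cal F_S-\cal F_R$, and defining $\cal Q_S$) goes through unchanged.
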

\begin{proof} 
Let $S=R[x]_{(0)}\cap T$.
It is readily seen that $S$ is infinite, that $S$ satisfies~\ref{nice1} and, if $R$ is infinite, that $|R|=|S|$.
Additionally, if $c\in aT\cap S$, then $c=ab$ for some $b\in S_{(0)}\cap T=S$, and 
so $c\in aS$ and condition~\ref{nice3} also holds.
For~\ref{nice2} consider any element $r\in\frak P\cap S$.
Then $r=f/g$ for some $f\in\frak P\cap R[x]$.
Treating $f$ as a polynomial in $x$ over $R$,
the assumption that $x+\frak P$ is transcendental over $R/\frak P\cap R$,
gives us that each of its coefficients must be in $\frak P\cap R=(0)$.
Hence $r=0$ and we have that $\frak P\cap S=(0)$.
Now most of the work of showing that $S$ is a $J$-extension of $R$ consists of
constructing the distinguished set~$\cal Q_S$ for~$S$ and verifying that it satisfies the necessary properties.

Using the axiom of choice, find an arbitrary 
extension of the ordering function~$\varphi_R$ 
to an ordering function~$\varphi_S$.
To define~$\cal Q_S$, select for each $p'T\in\cal F_S-\cal F_R$ an ideal~$\fq_{p'T}$
such that $\height(\fq_{p'T}) = \Lambda(\varphi_S(p'T))$, using Lemma~\ref{pickq};
then \ref{nice5} holds and each~$\fq_{p'T}$ satisfies condition~\ref{nice4}.
Define $\cal Q_S$ to be the union of $\cal Q_R$ and the set of recently chosen $\fq_{p'T}$.
To finish showing that $S$ is our desired $J$-subring, it remains to check condition \ref{nice4c}
for $\fq_{pT}\in\cal Q_R$.

We first show that $\fq_{pT}\cap R[x]=pT\cap R[x]=pR[x]$ for every $\fq_{pT}\in\cal Q_R$. 
The assumption that $x+\fq_{pT}$ is transcendental over $R/(\fq_{pT} \cap R)$
implies that if an element of $R[x]$ is contained in $\fq_{pT}$, 
then each of its coefficients is in $\fq_{pT}\cap R=pT\cap R=pR$. 
Thus $\fq_{pT}\cap R[x]\subseteq(\fq_{pT}\cap R)R[x]=pR[x]\subseteq pT\cap R[x]$,
and since $pT\subset\fq_{pT}$, we have equality throughout.

To now verify condition~\ref{nice4c} for $pT\in\cal F_R$, 
let $r=f/g\in\fq_{pT}\cap R[x]_{(0)}$ with $f,g\in R[x]$.
If $f$ and $g$ are both contained in $\fq_{pT}\cap R[x]=p R[x]$,
then we can write $f=p^n f'$ and $g=p^m g'$
with $f',g'\not\in\fq_{pT}$
and $r=(p^n f')/(p^m g')$.
Thus, by dividing out the highest common power of $p$
we can assume that $p$
does not divide every coefficient of~$f$ and~$g$,
and therefore that at most one of~$f$ or~$g$ lies in~$\fq_{pT}$.
Since $f=rg\in\fq_{pT}$, we have that $g\notin\fq_{pT}$.
Thus we can write $f=pf'$ for $f'\in R[x]$, and
unique factorization gives that $f'/g\in T$. 
Therefore $r\in pT$.
Intersecting both sides of $pT\subseteq\fq_{pT}$ with $S$
gives the opposite inequality, and so $\fq_{pT}\cap S=pT\cap S$.
\end{proof}

As noted above, there are various aspects of the construction 
that require us to adjoin transcendental elements to our ring $R$.  
The following lemmas cover these scenarios.

Recall from Lemma~\ref{machine} that in order for 
the constructed ring $A$ to have completion $T$,
the map $A\rightarrow T/\m^2$ needs to be surjective.
To guarantee surjectivity,
we will use Lemma~\ref{cosets} to adjoin
coset representatives of elements of $T/\m^2$.
The proof of this lemma resembles the proof of Lemma 2.5 of~\cite{charters04}.
We also need to ensure that $\fa T\cap A=\fa$ for every finitely generated ideal $\fa$ of $A$;  
Lemmas~\ref{closeone} and \ref{ideals} will help us obtain this property.

\begin{lemma}\label{cosets} 
Under Assumption~\ref{ass1}, let $t\in T$.
Then there exists a $J$-extension $S$ of $R$ such that $t+\m^2$ is in the image of the map $S\rightarrow T\slash\m^2$.
\end{lemma}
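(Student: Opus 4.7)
The plan is to locate $x \in \m^2$ such that $t+x+\fq$ is transcendental over $R/(\fq \cap R)$ for every $\fq \in \cal Q_R \cup \{\mathfrak P\}$, and then invoke Lemma~\ref{transextension} on the element $t+x$. The resulting infinite $J$-extension $S$ will contain $t+x$, and since $x \in \m^2$, the image of $t+x$ in $T/\m^2$ equals $t + \m^2$, which gives the conclusion.

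The nontrivial step is producing such an $x$, which I would do by a single application of prime avoidance. Set $C := \cal Q_R \cup \{\mathfrak P\}$ and, for each $\fq \in C$, let $B_\fq \subseteq T$ be a set of coset representatives for those elements of $T/\fq$ that are algebraic over the subring $R/(\fq \cap R)$. The condition I want is $t + x \notin B_\fq + \fq$ for every $\fq \in C$, equivalently $x \notin \bigcup\{d + \fq : d \in D,\; \fq \in C\}$, where $D := \bigcup_{\fq \in C}(B_\fq - t)$. This is exactly the form handled by Lemma~\ref{avoidancelem} with the ideal $\fa := \m^2$.

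To apply Lemma~\ref{avoidancelem} I need both $\m^2 \not\subseteq \fq$ for every $\fq \in C$ and $|C \times D| < |T/\m|$. The first is immediate: each $\fq \in C$ is a nonmaximal prime of~$T$, so $\m \not\subseteq \fq$, and primality of $\fq$ then forces $\m^2 \not\subseteq \fq$. For the cardinality bound, any algebraic element of $T/\fq$ over $R/(\fq \cap R)$ is a root of a nonzero polynomial with coefficients in that subring, so $|B_\fq| \leq \max(|R|, \aleph_0)$; similarly, $|C| \leq |\cal F_R| + 1 \leq \max(|R|, \aleph_0)$. Condition \ref{nice1} and the assumption $|T/\m| > \aleph_0$ from Assumption~\ref{ass1} together give $\max(|R|, \aleph_0) < |T/\m|$, and hence $|C \times D| < |T/\m|$. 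The main obstacle is precisely this cardinality bookkeeping; once $x \in \m^2$ is selected, Lemma~\ref{transextension} finishes the proof by producing the desired $J$-extension containing $t+x$.
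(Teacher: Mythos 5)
Your proposal is correct and follows essentially the same route as the paper's proof: use Lemma~\ref{avoidancelem} with $\fa=\m^2$ to find $x\in\m^2$ making $t+x+\fq$ transcendental over $R/(\fq\cap R)$ for all $\fq\in\cal Q_R\cup\{\mathfrak P\}$, then apply Lemma~\ref{transextension} to $t+x$. The cardinality bookkeeping and the observation that $\m^2\not\subseteq\fq$ for nonmaximal primes match the paper's argument.
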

\begin{proof}

For each $\fq\in\cal Q_R\cup\{\mathfrak{P}\}$,
let $D_{\fq}$ be a full set of representatives of
the cosets $x+\fq \in T/\fq$ such that
$t+x+\fq \in T/\fq$ is algebraic over $R\slash\fq\cap R$;
then, if $R$ is infinite, $|D_{\fq}| \leq |R|<|T\slash\m|.$  If $R$ is finite, then it is clear that $|D_{\fq}|<|T\slash\m|.$
Define $D=\bigcup\{D_\fq\mid\fq\in\cal Q_R\cup\{\mathfrak{P}\}\}$.
As $\m^2\not\in\fq$ for any $\fq\in\cal{Q}_R\cup\{\frak P\}$
and $|D|<|T\slash\m|$,
we can choose an element $x\in\m^2$ using
Lemma $\ref{avoidancelem}$ such that $x+t+\fq$ is transcendental over $R\slash\fq\cap R$ for all $\fq\in\cal{Q}_R\cup\{\mathfrak{P}\}$.
By Lemma $\ref{transextension}$ we can find a
$J$-extension $S$ of~$R$ containing $t+x$. 
Then $t+\m^2$ is in the image of the map $S\rightarrow T/\m^2$.
\end{proof}

\begin{lemma} \label{closeone} 
Under Assumption~\ref{ass1},
let $\fa$ be a finitely generated ideal of $R$ and let $c\in\fa T\cap R$.
Then there exists a $J$-extension~$S$ of $R$ such that $c\in\fa S$.
\end{lemma}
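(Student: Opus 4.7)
The plan is to proceed by induction on $n$, the number of generators of $\fa = (a_1, \ldots, a_n)$. The base case $n = 1$ is immediate from condition \ref{nice3}: $c \in a_1 T \cap R = a_1 R = \fa R$, so $S = R$ itself works as a trivial $J$-extension.

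For the inductive step $n \geq 2$, I would write $c = \sum_{i=1}^{n} a_i t_i$ with $t_i \in T$, and then seek a $J$-extension $R_1$ of $R$ containing an element $s_1$ for which $a_1 s_1 - c \in (a_2, \ldots, a_n) T$. Granting this, $c - a_1 s_1 \in (a_2, \ldots, a_n) T \cap R_1$, and the inductive hypothesis applied in $R_1$ to the $(n-1)$-generator ideal $(a_2, \ldots, a_n) R_1$ produces a $J$-extension $S$ of $R_1$ (hence of $R$) with $c - a_1 s_1 \in (a_2, \ldots, a_n) S$, yielding $c = a_1 s_1 + (c - a_1 s_1) \in \fa S$. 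The natural choice is $s_1 = t_1 + \delta$ with $\delta \in \bigl((a_2, \ldots, a_n) T : a_1\bigr)$; the colon condition automatically gives $a_1 s_1 - c \in (a_2, \ldots, a_n) T$. To place $s_1$ in a $J$-extension via Lemma~\ref{transextension}, one needs $s_1 + \fq$ transcendental over $R/(\fq \cap R)$ for every $\fq \in \cal Q_R \cup \{\mathfrak P\}$: letting $D_\fq \subseteq T$ be coset representatives for those elements of $T/\fq$ algebraic over $R/(\fq \cap R)$, a set of cardinality less than $|T/\m|$ by \ref{nice1} and Lemma~\ref{cardinalitymodp}, Lemma~\ref{avoidancelem} applied to the ideal $\bigl((a_2, \ldots, a_n) T : a_1\bigr)$, with $C = \cal Q_R \cup \{\mathfrak P\}$ and $D = \bigcup_\fq (D_\fq - t_1)$, produces the desired $\delta$.

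The main technical obstacle is verifying the hypothesis $\bigl((a_2, \ldots, a_n) T : a_1\bigr) \not\subseteq \fq$ of Lemma~\ref{avoidancelem}. A direct check shows this containment fails exactly when $(a_2, \ldots, a_n) \subseteq \fq$, in which case $\delta$ is forced into $\fq$, so $s_1 + \fq = t_1 + \fq$, which lies in $\mathrm{Frac}(R/(\fq \cap R))$ and hence is algebraic over $R/(\fq \cap R)$. To handle each such problematic $\fq$, a preliminary reduction using the prime $p \in R$ with $\fq = \fq_{pT}$ is needed: in the sub-case $\fa \subseteq \fq$, factor out $p$ by writing $a_i = p b_i$ and $c = p c'$, and recurse on $c' \in (b_1, \ldots, b_n) T \cap R$; in the sub-case where only $(a_2, \ldots, a_n) \subseteq \fq$ (so $a_1 \notin \fq$), arrange by a prior extension that $t_1 + \fq$ lies in $R/(\fq \cap R)$ rather than merely its fraction field, so that $c$ can be rewritten in a form where the problematic $\fq$ no longer obstructs the argument. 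These case analyses form the most delicate part of the proof.
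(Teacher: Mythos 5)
Your overall strategy --- induct on the number of generators, adjoin a perturbed coefficient $s_1=t_1+\delta$ of one generator so that the remainder $c-a_1s_1$ lands in the ideal generated by the other $n-1$ generators, and recurse --- is the same skeleton as the paper's proof (the paper perturbs the coefficient of $a_n$ rather than of $a_1$, but that is immaterial), and your treatment of the unobstructed case via Lemma~\ref{avoidancelem} and Lemma~\ref{transextension} is fine, as is the reduction that factors out $p$ when \emph{all} of the $a_i$ lie in some $\fq_{pT}$. The gap is in the remaining sub-case, $(a_2,\dots,a_n)\subseteq\fq$ but $a_1\notin\fq$ for some $\fq=\fq_{pT}\in\cal Q_R$, and it is not a removable technicality: \emph{every} admissible choice of $s_1$ is algebraic modulo $\fq$. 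Indeed, any $s_1$ with $c-a_1s_1\in(a_2,\dots,a_n)T\subseteq\fq$ satisfies $(a_1+\fq)(s_1+\fq)=c+\fq$ with $a_1+\fq\neq 0$, so $s_1+\fq$ is a root of a nontrivial linear polynomial over $R/(\fq\cap R)$. Hence Lemma~\ref{transextension} can never be invoked for such an $s_1$, no matter how $\delta$ is chosen. Your proposed repair fails for the same reason: an element $r\in T$ with $r\equiv t_1\pmod{\fq}$ is itself algebraic modulo $\fq$, so the ``prior extension'' containing it cannot be produced by Lemma~\ref{transextension} either, and you offer no other mechanism for building a $J$-extension containing it (in particular, no verification of \ref{nice4c} for this $\fq$ in $R[r]_{(0)}\cap T$). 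The subsequent ``rewriting of $c$'' is also unexplained: knowing $t_1-r\in\fq$ only gives $c-a_1r\in\fq$, not $c-a_1r\in(a_2,\dots,a_n)T$, so the recursion does not obviously restart; and several distinct $\fq$ may be problematic at once.

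This obstructed sub-case is exactly what consumes most of the paper's proof. The paper's resolution in the $n=2$ case is to adjoin \emph{two} elements $x_1'=x_1+a_2y$ and $x_2'=x_2-a_1y$ simultaneously and take $S=R[x_1']_{(0)}\cap R[x_2']_{(0)}\cap T$: for each $\fq$ at least one of $a_1,a_2$ lies outside $\fq$, hence at least one of $x_1'+\fq$, $x_2'+\fq$ is transcendental, and that one already forces $\fq\cap S=pT\cap S$. In the general case the paper factors $a_i=ra_i'$ ($i<n$) so that the $a_i'$ have no common distinguished prime, runs the two-element $n=2$ argument on the pair $(r,a_n)$, and then recurses, with termination guaranteed by the finiteness of prime factorizations in $T$. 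Some device of this kind --- adjoining more than one element at a time and intersecting the resulting localizations, or an equivalent --- is needed to close your argument; as written, the inductive step does not go through whenever some $\fq\in\cal Q_R$ contains $a_2,\dots,a_n$ but not $a_1$.
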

\begin{proof}
Our proof is similar to that of Lemma~4 of~\cite{heitmann93}, and
we proceed by induction on the number of generators of~$\fa$.  It will be clear from the construction that, if $R$ is infinite, then $|R|=|S|$.
If $\fa$ is the zero ideal, then $S = R$ works.  So, for the rest of the proof, assume that $\fa$ is not the zero ideal.
Note that, if $n=1$, then $S = R$ works since $R$ is a $J$-subring.

For the $n=2$ case, let $\fa$ be generated by $a_1$ and $a_2$
and write $c = a_1x_1 + a_2x_2$ for some $x_1,x_2\in T$. 
Suppose first that $a_1,a_2 \in \fq_{pT}$ for some $\fq_{pT}\in\cal Q_R$
with $p\in R$. Then $a_1$ and $a_2$ are both in $\fq_{pT}\cap R=pR$,
so by dividing out the highest power of~$p$ dividing both~$a_1$
and~$a_2$, we can replace these with $a_1'$ and $a_2'$
where at least one of $a_1'$ and $a_2'$ is not in $\fq_{pT}$.
To prove the lemma, it suffices to prove it with $c$ replaced by the element $c'=a_1'x_1+a_2'x_2$, 
$a_1$ replaced by $a'_1$, and $a_2$ replaced by $a'_2$, but we now have that at least one of $a'_1$ and $a'_2$ is not contained in $\fq_{pT}$.
By repeating this process if necessary we may assume without loss
of generality that, if $\fq\in\cal Q_R$, then $a_1$ and $a_2$ are not both in $\fq$.  Note also that neither $a_1$ nor $a_2$ are in $\mathfrak{P}$.

Let $x_1'=x_1+a_2y$ and $x_2'=x_2-a_1y$ for some $y\in T$ to be chosen soon
using Lemma~\ref{avoidancelem}.
For each $\fq\in\cal Q_R\cup\{\mathfrak{P}\}$ such that $a_2\notin\fq$,
let $D^1_\fq$ be a set of representatives for the cosets
$y+\fq$ such that $x_1'+\fq$ is algebraic over $R/\fq\cap R$.
If $a_2\in\fq$, let $D^1_\fq=\varnothing$.
Define $D^2_\fq$ similarly with $a_2$ replaced by $a_1$ and $x_1'$ replaced by $x_2'$, and let
$D=\bigcup\{D^1_\fq\cup D^2_\fq\mid\fq\in\cal Q_R\cup\{\mathfrak{P}\}\}$.
Use Lemma~\ref{avoidancelem} to choose an element $y\in T$ such that
$y\notin\bigcup\{t+\frak q\mid t\in D,\frak q\in\cal Q_R\cup\{\mathfrak{P}\}\}$.
Let $S=R[x_1']_{(0)}\cap R[x_2']_{(0)}\cap T$;
we claim that $S$ is the desired $J$-extension.
Note that \ref{nice2} is satisfied, since the fact that $x_1'+\frak P$ is transcendental over $R/\frak P\cap R$
implies that $\frak P\cap R[x_1']_{(0)}=(0)$, as in the proof of Lemma~\ref{transextension},
and so $\frak P\cap S=(0)$.
Since $c\in\fa S$
and since $S$ satisfies~\ref{nice1} and since it is easy to verify, as in the proof of Lemma \ref{transextension}, that $S$ satisfies~\ref{nice3}, 
it remains to extend the distinguished set of~$R$
to~$S$ and verify that the necessary properties hold. 
Let $\fq_{pT}\in\cal Q_R$;
at least one of $a_1$ or $a_2$ is not in $\fq_{pT}$,
so without loss of generality we may assume $a_2\notin\fq_{pT}$.
Properties \ref{nice4a} and \ref{nice4b} still hold, and, as in the proof of 
Lemma~\ref{transextension}, we can conclude that
$\fq_{pT}\cap R[x_1']_{(0)}=pT\cap R[x_1']_{(0)}$.
Intersecting this equation with $S$
gives $\fq_{pT}\cap S=pT\cap S$, as needed.  Using the axiom of choice, find an
arbitrary extension of the ordering function $\varphi_R$ to an ordering function $\varphi_S$.
Finally, for elements $pT$ of $\cal F_S-\cal F_R$, use Lemma~\ref{pickq}
to choose $\fq_{pT}\in\spec_dT$ where $d = \Lambda(\varphi_S(pT))$
so that these prime ideals satisfy
the conditions listed in~\ref{nice4}.
Defining $\cal Q_S$ to be the union of
$\cal Q_R$ with the set of these $\fq_{pT}$
completes the proof of the case $n=2$.

For the general inductive step,
let $\fa=(a_1,\dots,a_n)$ be an $n$-generated ideal of~$R$
and let $c=\sum_{i=1}^n a_ix_i\in\fa T\cap R$
for $x_i\in T$. As in the $n=2$ case, we may assume
without loss of generality that if $\fq \in \cal Q_R$, then at least one of $a_1,a_2, \ldots ,a_n$ is not contained in $\fq$.
Let $\fb=(a_1,\dots,a_{n-1})R$.
The inductive step breaks into two cases,
depending on whether or not there is a $\fq \in \cal Q_R$ that contains all of $a_1, a_2, \ldots ,a_{n - 1}$.

Suppose first that there is no such $\fq$ in $\cal Q_R$.
Let $x_n'=x_n + a_1y_1 + \cdots + a_{n-1}y_{n-1}$
and $x_i'=x_i-a_ny_i$ ($i\neq n$) for some $y_i\in T$
to be determined, so that $c = \sum_{i=1}^n a_ix_i'$.
Using Lemma~\ref{avoidancelem}, 
choose $y_1$ so that $x_n + a_1y_1 + \fq$
is transcendental over $R/\fq\cap R$ for every $\fq\in\cal Q_R\cup\{\mathfrak{P}\}$
such that $a_1\notin\fq$.
Next, choose $y_2$ so that $x_n + a_1y_1 + a_2y_2 + \fq$
is transcendental over $R/\fq\cap R$
for every $\fq\in\cal Q_R\cup\{\mathfrak{P}\}$ such that $a_2\notin\fq$.
This choice will not affect the previous transcendence conditions:
that is, if $x_n + a_1y_1 + \fq$ was transcendental over
$R/\fq\cap R$ then $x_n + a_1y_1 + a_2y_2 + \fq$ will
also be transcendental.
Continuing to choose $y_i$'s in this way,
the final element $x_n'$ will have the property that
$x_n'+\fq$ is transcendental over $R/\fq\cap R$
for every $\fq\in\cal Q_R\cup\{\mathfrak{P}\}$ not containing
each of $a_1,a_2,\dots,a_{n-1}$.
By our earlier assumption, 
there is no $\fq\in\cal Q_R$ that contains
all of these elements.
We may therefore use Lemma~\ref{transextension}
to procure a $J$-extension $R'$
of~$R$ containing~$x_n'$.  Letting $c' = \sum_{i = 1}^{n - 1} a_ix_i'$, we have $c' \in \fb T \cap R'$ and so, by induction, there is a $J$-extension $S$ of $R'$ such that $c' \in \fb S$.  It follows that $c \in \fa S$.

Suppose next that there is a $\fq \in \cal Q_R$ such that $a_1,a_2, \ldots ,a_{n - 1}$ are all contained in $\fq$.
This final step of the proof is the most elaborate, and will proceed in part by reduction to previous cases.
The idea of the proof that follows
is to replace $\fb$ with an ideal~$\fb'$ whose generators share no common factor in~$R$.

Factoring out common divisors of the $a_i$'s as in the $n=2$ case,
we can find an element~$r\in R$ with
$a_i=r a_i'$ for each $i<n$
and such that, for every $\fq \in \cal Q_R$, at least one of $a'_1, a'_2 \ldots ,a'_{n - 1}$ is not in $\fq$.  Let $\fb'=(a_1',\dots,a_{n-1}')$.
Write $c$ as $c = rx + a_nx_n$,
where $x=a_1'x_1 + \cdots + a_{n-1}'x_{n-1}$.
We can now apply the construction of the $n=2$ case
to the ideal $(r,a_n)$
to find a $J$-extension $R'$ of~$R$
containing elements $x'$ and $x_n'$
such that $c=rx'+a_nx_n'$.
To finish, we'll find a sufficient
condition on a ring~$S$ containing~$R'$ so that $c\in\fa S$,
then construct~$S$ to satisfy the condition.
In general, to ensure that $c\in\fa S$ it is enough
that $rx'\in\fa S$, and to ensure that $rx'\in\fa S$ it is enough
that $x'\in\fb'S + a_n S$.
By the method of construction in the $n=2$ case, the element $x'$
has the form $x' = x + ta_n$ for some $t\in T$;
hence $x'\in(\fb' T + a_n T)\cap R'$.
In this way the problem is reduced to constructing
a $J$-extension $S$ of $R'$
such that $x'\in (\fb'S + a_n S)$.

If, for every $\fq \in \cal Q_{R'}$, at least one of $a'_1, a'_2, \ldots , a'_{n - 1}$ is not contained in $\fq$, then
we can finish by reducing to the previous case.
However, even though for every $\fq \in \cal Q_R$, at least one of $a'_1, a'_2, \ldots , a'_{n - 1}$ is not contained in $\fq$, 
it might happen that there is a $\fq \in \cal Q_{R'}$ containing all of $a'_1,a'_2 \ldots ,a'_{n - 1}$, since $R'$ might contain primes of~$T$ that divide elements of~$R$ but that were not present in~$R$.
If so, repeat the argument of the previous paragraph to modify the generators of $\fb'$.  We know this
process must end since the generators of $\fb'$ have only finitely many prime factors in $T$.  When the process ends, we may reduce to the case where the generators of $\fb'$ are not all contained in an element of~$\cal Q_{R}$.
\end{proof}

At the end of our construction,
if $pT\in\spec_1T$ is not in the generic formal fiber of $A$,
then it must be contained in the formal fiber of some height one prime ideal of $A$.
In order for the formal fiber ring of this prime ideal to be regular it must be reduced, and 
so we would like $pT$ to be the unique minimal element of that formal fiber.
We can achieve this if $A$ has the property that if $pT\in\spec_1 T$ has nonzero intersection with $A$, then $pT\in\cal F_A$.  We therefore introduce the following property for a subring $R$ of $T$.
\begin{enumerate}[label=(J.\arabic*)]
\setcounter{enumi}{4}
	\item\label{nice6} if $pT\in\spec_1 T$ has nonzero intersection with $R$, then $pT\in\cal F_R$.
\end{enumerate}

Thus, as we continue our construction,
we are particularly interested in $J$-subrings of $T$ satisfying~\ref{nice6}.
The next lemma allows us to find such subrings.

\begin{lemma} \label{ideals}
Under Assumption~\ref{ass1}, there exists a $J$-extension~$S$ of~$R$
such that, for every finitely generated ideal~$\fa$
of~$S$, $\fa T\cap S = \fa$ and $S$ satisfies \ref{nice6}
\end{lemma}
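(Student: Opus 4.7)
The plan is to construct~$S$ by a two-level transfinite recursion. The outer level, indexed by~$\omega$, builds a chain $R = R_0 \subseteq R_1 \subseteq R_2 \subseteq \cdots$ of $J$-extensions and sets $S = \bigcup_n R_n$. Stage~$n$ is designed so that~$R_{n+1}$ closes up~$R_n$ in two senses: every finitely generated ideal~$\fa$ of~$R_n$ satisfies $\fa T \cap R_n \subseteq \fa R_{n+1}$, and every height-one prime~$pT$ of~$T$ with $pT \cap R_n \neq 0$ lies in~$\cal F_{R_{n+1}}$. A diagonalization then yields the two properties required by the lemma, since any finitely generated ideal of~$S$ and any relevant height-one prime of~$T$ already appear at some finite stage and are dealt with at the next.

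To pass from~$R_n$ to~$R_{n+1}$, I would run an inner transfinite recursion of length at most~$|R_n| < |T/\m|$ that enumerates the closure tasks just described. Limit steps are handled by Lemma~\ref{unions}; successor tasks of the first type are dispatched by Lemma~\ref{closeone}. For a successor task of the second type, let~$pT$ be the target prime and let~$R'$ denote the current $J$-subring. The aim is to adjoin a carefully chosen unit multiple of~$p$. Following the style of Lemma~\ref{cosets}, for each $\fq \in \cal Q_{R'} \cup \{\frak P\}$ take a set $D_\fq$ of coset representatives $u + \fq$ for which $pu + \fq$ is algebraic over $R'/(\fq \cap R')$; these sets have combined cardinality below~$|T/\m|$. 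Lemma~\ref{unittrans}, applied with $y = p$, $z = 1$, $C = \cal Q_{R'} \cup \{\frak P\}$, and $D$ the union of the $D_\fq$'s, would then yield a unit $u \in T$ such that $pu + \fq$ is transcendental over $R'/(\fq \cap R')$ for every such~$\fq$. Lemma~\ref{transextension} with $x = pu$ then produces the desired $J$-extension containing~$pu$, thereby placing~$pT$ into~$\cal F$.

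The prerequisite for this application of Lemma~\ref{unittrans}, and the main obstacle of the whole argument, is the claim that $p \notin \fq$ for every $\fq \in \cal Q_{R'} \cup \{\frak P\}$; without it, the attempt to adjoin~$pu$ would visibly violate~\ref{nice4c} for the already chosen~$\fq$'s. For $\fq = \frak P$ this is immediate, since $pT \subseteq \frak P$ would contradict $\frak P \cap R' = 0 \neq pT \cap R'$. For $\fq = \fq_{qT} \in \cal Q_{R'}$, suppose for contradiction that $p \in \fq_{qT}$ and choose a nonzero $a \in pT \cap R'$. Then $a \in \fq_{qT} \cap R' = qR'$ by~\ref{nice4c}, so $a = qr$ for some $r \in R'$. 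Because~$T$ is a UFD and $p, q$ are distinct primes, the factorization $qr = a \in pT$ forces $p \mid r$ in~$T$, so $r \in pT \cap R'$ and $r \neq 0$. Iterating produces $a \in q^k T$ for every $k \geq 0$, whence Krull's intersection theorem in the Noetherian local domain~$T$ forces $a = 0$, a contradiction. With the key claim established, the remaining work is routine cardinality bookkeeping and invocations of the previously proved lemmas.
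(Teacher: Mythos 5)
Your proposal matches the paper's proof in all essentials: the same outer $\omega$-chain with diagonalization, the same inner transfinite recursions dispatched by Lemma~\ref{closeone} and Lemma~\ref{unions}, the same adjunction of a unit multiple $pu$ via Lemmas~\ref{unittrans} and~\ref{transextension}, and, crucially, the same key argument (repeated division by the prime $q$ together with $\bigcap_k q^kT=(0)$) showing that the new prime lies in no element of $\cal Q_{R'}\cup\{\frak P\}$. The only differences are organizational --- the paper runs the two closure tasks as separate sub-stages $R_n\subseteq R_{n+0.5}\subseteq R_{n+1}$ rather than interleaving them, and it explicitly bootstraps a finite $R$ to an infinite $J$-extension before starting (a degenerate case your argument handles vacuously, since a finite $J$-subring is a field whose nonzero elements are units of $T$).
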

\begin{proof}  First suppose that $R$ is finite.  Let $C = \cal Q_R \cup \{\frak P\}$, and, for each $\fq \in C$, let $D_{\fq}$ be a full set of coset representatives of the cosets $x + \fq \in T/\fq$ that are algebraic over $R/R \cap \fq$.  Now define $D = \cup\{D_{\fq} \mid \fq \in C \}$ and, as in previous proofs, use Lemma \ref{avoidancelem} to find an element $x \in T$ such that $x + \fq$ is transcendental over $R/R \cap \fq$ for all $\fq \in C$.  By Lemma \ref{transextension}, there is a $J$-extension $R_0$ of $R$ that contains $x$.  Note that $R_0$ is infinite.  We will construct $S$ to be a $J$-extension of $R_0$, and so it will also be a $J$-extension of $R$.  If $R$ is infinite, then let $R_0 = R$.

Now we inductively construct a countable ascending chain $R_0\subseteq R_1\subseteq\cdots$ of
$J$-extensions and take $S=\bigcup_{i = 1}^{\infty} R_i$.
The rings will be constructed so that,
if $\fa$ is a finitely generated ideal of~$R_n$,
then $\fa T\cap R_n\subseteq \fa R_{n+1}$ and $\{pT\in\spec_1 T\mid pT\cap R_n\neq(0)\}\subseteq\mathcal F_{R_{n+1}}$ for each~$n$.
We may then take $S=\bigcup_{i = 1}^{\infty} R_i$. 
By Lemma~\ref{unions} the ring~$S$ is a $J$-extension of $R$,
and since $\cal F_S=\bigcup_{i = 1}^{\infty}\cal F_{R_i}$, the ring~$S$ satisfies~\ref{nice6}. If $\fa$ is a finitely generated ideal of~$S$ and $c\in\fa T\cap S$, then there is an $n$ such that the generators of $\fa$ are in $R_n$ and $c\in\fa T\cap R_n$, and hence $c\in \fa R_{n+1}\subseteq \fa S = \fa$.

It remains to construct the countable ascending chain.
Having already defined $R_0$,
suppose inductively that the ring~$R_n$ has been constructed.
To construct $R_{n+1}$, we proceed in two steps.  First we construct $R_{n+0.5}$ by constructing a second ascending chain of rings
as follows.
Let $\Omega$ denote the set of tuples $(\fa,c)$,
where $\fa$ is a finitely generated ideal of~$R_n$
and $c\in\fa T\cap R_n$.
We have that $|\Omega| = |R_n|$ because $R_n$ is infinite and so it has the same cardinality as the set of its finite subsets.
Well order $\Omega$ so that it does not have a maximal element, write $\lambda=|\Omega|$, and let $(\fa_\beta,c_\beta)$ denote the element of~$\Omega$ corresponding to the ordinal $\beta\in\lambda$.
We construct a chain~$(R^\beta)_{\beta\in\lambda}$, and we
let $R^0=R_n$ to start.
If $\beta=\gamma+1$ is a successor ordinal,
let $R^\beta$ be a $J$-extension of~$R^\gamma$
such that $c_\gamma\in \fa_\gamma R^\gamma$,
obtained by Lemma~\ref{closeone}.
If $\beta$ is a limit ordinal,
let $R^\beta=\bigcup_{\gamma<\beta} R^\gamma$ and define $\cal Q_{R_{\beta}} = \bigcup_{\gamma < \beta} \cal Q_{R_{\gamma}}$.  We claim that the chain $(R^\beta)_{\beta\in\lambda}$ satisfies the hypotheses of Lemma \ref{unions}.  To show this, we show that $R^{\beta}$ satisfies the conditions of Lemma \ref{unions} for all $\beta \in \lambda$.  Note that $R^0 = R_n$ is a $J$-subring, and so it satisfies the conditions of Lemma \ref{unions}.  Now suppose that $R^{\gamma}$ satisfies the conditions of Lemma \ref{unions} for all $\gamma < \beta$.  If $\beta$ is a successor ordinal, then $R^{\beta}$ is a $J$-extension of $R^{\gamma}$ by definition.  If $\beta$ is a limit ordinal, then Lemma \ref{unions} gives us that $R^{\beta}$ is a $J$-subring and in the proof of Lemma \ref{unions} we see that the distinguished set for $R^{\beta}$ is $\cal Q_{R_{\beta}} = \bigcup_{\gamma < \beta} \cal Q_{R_{\gamma}}$.  We also see in the proof that the ordering function for $R^{\beta}$ satisfies $\varphi_{R_{\beta}}(pT) = \varphi_{R_{\gamma}}(pT)$ for every $\gamma < \beta$ and every $pT \in \cal F_{R{\gamma}}$.  It follows that $R^{\beta}$ satisfies the conditions of Lemma \ref{unions}.
Finally, let $R_{n+0.5}=\bigcup_{\beta\in\lambda} R^\beta$. By Lemma \ref{unions}, $R_{n+0.5}$ is a $J$-extension of $R_n$.  It is clear by construction that 
if $\fa$ is a finitely generated ideal of~$R_n$ then $\fa T\cap R_n\subseteq\fa R_{n+0.5}$.

Next we proceed from $R_{n+0.5}$ to $R_{n+1}$ in a similar fashion, constructing another ascending chain of $J$-subrings of $T$.  Let $$\cal G_{R_n}=\{pT\in\spec_1 T\mid pT\cap R_n\neq(0)\} $$
and let $\mu=|\cal G_{R_n}|$.
We will construct a chain of $J$-subrings $(R^{\beta})_{\beta \in \mu}$ so that they all contain $R_n$.  It follows that if $R^{\gamma}$ is an element of our chain, and $pT \in \cal G_{R_n}$, then $pT \cap R^{\gamma} \neq (0)$.
Well-order the set~$\cal G_{R_n}$ so that it does not have a maximal element
and let $p_\beta$ denote a generator of an element of $\cal G_{R_n}$
corresponding to the ordinal~$\beta$.
To start, set $R^0=R_{n+0.5}$.
If $\beta=\gamma+1$ is a successor ordinal, and $p_{\gamma}T \in \cal F_{R^{\gamma}}$ then define $R^{\beta}$ to be $R^{\gamma}$.
If $p_{\gamma}T \not\in \cal F_{R^{\gamma}}$, and $p_{\gamma} \in \fq_{pT}$ for some $\fq_{pT} \in \cal Q_{R^{\gamma}}$ then let $r \in p_{\gamma}T \cap R^{\gamma}$.   So $r = p_{\gamma}t$ for some $t \in T$ and $r = p_{\gamma} t \in R^{\gamma} \cap \fq_{pT} = pR^{\gamma}$, and we have $r = p_{\gamma}t = pr_1$ for some $r_1 \in R^{\gamma}$.  Since $p_{\gamma}T \not\in \cal F_{R^{\gamma}}$, $p_{\gamma}T \neq pT$.  It follows that $p$ divides $t$.  Dividing out by $p$, we get $p_{\gamma}t' = r_1 \in \cal \fq_{pT} \cap R^{\gamma} = pR^{\gamma}$ for some $t' \in T$.  Hence $p$ divides $t'$.  Continue in this way to show that $r \in p^iT$ for all $i = 1,2,\ldots$.  Since $\bigcap_{i = 1}^{\infty} p^i T = (0)$ we have that $r = 0$, contradicting that $p_{\gamma}T \cap R_n \neq (0)$.  So if $\fq \in \cal Q_{R^{\gamma}}$ then $p_{\gamma} \not\in \fq$.
Now use Lemma~\ref{unittrans}
to find a unit $u\in T$ such that
$up_\gamma + \fq$ is transcendental over $R^{\gamma}/\fq\cap R^{\gamma}$
for every $\fq\in\cal Q_{R^{\gamma}} \cup \{\frak{P}\}$;
then use Lemma~\ref{transextension} to construct $R^\beta$
to be a $J$-extension of~$R^\gamma$
containing $up_\gamma$.
If $\beta$ is a limit ordinal,
let $R^\beta=\bigcup_{\gamma<\beta}R^\gamma$.
Finally, let $R_{n+1}=\bigcup_{\beta\in\mu} R^\beta$.
Following the proof from the previous paragraph, we can conclude that $R_{n + 1}$ is a $J$-extension of $R_{n + 0.5}$, and so $R_{n + 1}$ is a $J$-extension of $R_n$.
It is clear from the construction that $\cal G_{R_n}\subseteq\cal F_{R_{n+1}}$.  Furthermore, we note that if $\fa$ is a finitely generated ideal of $R_n$, $\fa T\cap R_n\subseteq\fa R_{n+0.5}\subseteq \fa R_{n+1}$.
\end{proof}

Recall that the ring $A$ we construct will have the property that
the formal fiber ring of $\p\in\spec A$ is a field
if $\height\p>1$.
To achieve this, we would like $\p T$ to be the only element
in the formal fiber of $A$ at $\p$;
applying Lemma~\ref{generators} to certain prime ideals of $T$
will allow us to accomplish this.
The same lemma also enables us to add nonzero elements
from prime ideals of $T$ with height greater than that of $\frak P$
which will ensure that $\alpha(A,(0))$ is no greater than $\height\frak P$.

The following lemma is used in the proof of \ref{generators} and comes from the proof of Lemma 3 in~\cite{heitmann93}.

\begin{lemma}\label{avoidvs} Let $k$ be an infinite field, let $V$ be a finite-dimensional $k$-vector space,
and let $\{V_\beta\}_{\beta\in A}$ be a collection of $k$-vector spaces
indexed by a set~$A$ with $|A|<|k|$.
If for all $\beta\in A$ we have $V\not\subseteq V_\beta$,
then $V\not\subseteq\bigcup_{\beta\in A}V_\beta$.
\end{lemma}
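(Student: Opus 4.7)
The plan is to prove this by induction on $\dim V$, after a preliminary reduction. First, replacing each $V_\beta$ with $V \cap V_\beta$, I may assume each $V_\beta$ is a subspace of $V$; the hypothesis $V \not\subseteq V_\beta$ then says $V_\beta \subsetneq V$. Since the quantity $\bigcup_{\beta\in A} V_\beta$ only gets smaller under this replacement, the conclusion is unaffected.

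For the base case $\dim V = 1$, the only proper subspace of $V$ is $\{0\}$, so $\bigcup_\beta V_\beta \subseteq \{0\} \neq V$ since $|k|\geq 2$. For the inductive step, suppose the result holds in dimension $n$, and let $\dim V = n+1$. I will choose a nonzero vector $v \in V$ and partition the index set as $A = A_1 \sqcup A_2$, where $A_1 = \{\beta : v \in V_\beta\}$ and $A_2 = A \setminus A_1$. For $\beta \in A_1$ the subspace $V_\beta$ contains $\langle v\rangle$, so it descends to a proper subspace $V_\beta/\langle v\rangle$ of the $n$-dimensional quotient $V/\langle v\rangle$. Since $|A_1|\leq|A|<|k|$, the inductive hypothesis supplies an element $\bar w \in V/\langle v\rangle$ lying in no $V_\beta/\langle v\rangle$ for $\beta\in A_1$. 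Lifting to $w\in V$, we have $w\notin V_\beta$ for all $\beta \in A_1$.

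It remains to find $t\in k$ such that $w+tv$ also avoids every $V_\beta$ with $\beta\in A_2$, while still avoiding those with $\beta\in A_1$. For $\beta \in A_1$, no $t$ yields $w+tv \in V_\beta$, since $v\in V_\beta$ would then force $w\in V_\beta$, contradicting the choice of $w$. For $\beta \in A_2$, at most one value of $t$ satisfies $w+tv \in V_\beta$: if two distinct values $t, t'$ worked, then $(t-t')v \in V_\beta$, forcing $v\in V_\beta$ and contradicting $\beta\in A_2$. Hence the set of ``bad'' values of $t$ has cardinality at most $|A_2|\leq|A|<|k|$, so since $k$ is infinite a ``good'' $t$ exists and $w+tv\in V\setminus\bigcup_{\beta\in A} V_\beta$.

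There is no serious technical obstacle; the one conceptual point to watch is that the partition of $A$ into $A_1$ and $A_2$ is what makes induction possible, because the $\beta\in A_1$ contribute exactly the subspaces that descend faithfully to the quotient by $\langle v \rangle$, while the $\beta\in A_2$ are the ones for which the one-parameter family $\{w+tv : t \in k\}$ can intersect $V_\beta$ in at most one point. The hypothesis $|A|<|k|$ is used twice: once to invoke the inductive hypothesis on the quotient, and once to ensure that the finite-per-$\beta$ obstruction from $A_2$ does not exhaust the infinite field $k$.
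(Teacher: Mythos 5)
Your proof is correct. Note that the paper does not actually prove Lemma~\ref{avoidvs}; it simply quotes it from the proof of Lemma~3 of Heitmann's paper, so there is no in-paper argument to compare against line by line. Your argument is a valid, self-contained proof, and its engine is the same one driving Heitmann's prime avoidance lemma (our Lemma~\ref{avoidancelem}): a one-parameter pencil $\{w+tv : t\in k\}$ meets a subspace not containing $v$ in at most one point, so fewer than $|k|$ such subspaces cannot exhaust the pencil. The preliminary reduction to $V_\beta\cap V$ is sound because an element of $V$ lies in $V_\beta$ if and only if it lies in $V_\beta\cap V$, and the induction on $\dim V$ (passing to $V/\langle v\rangle$ to handle the subspaces that do contain $v$) cleanly disposes of the indices in $A_1$. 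The only cosmetic remarks: the degenerate cases ($\dim V=0$, or $A_1=\varnothing$) are handled vacuously and could be mentioned, and the appeal to ``$k$ infinite'' at the end is not really what is used --- the count $|A_2|<|k|$ alone guarantees a good $t$ --- but since infiniteness of $k$ is a standing hypothesis this is harmless.
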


\begin{lemma}\label{generators} 
Under Assumption~\ref{ass1}, suppose that $T$ contains a field~$k$ with $|k|=|T|$.
Let $\fa$ be an ideal of $T$ such that $\fa \not\subseteq \frak P$ and such that, if $\fq \in\cal Q_R$, then
~$\fa \not\subseteq \fq$.
Then there is a $J$-extension $S$ of~$R$ such that either $\fa \subseteq \p$ for some $\p \in \cal Q_S$ or $(\fa\cap S)T=\fa$.
\end{lemma}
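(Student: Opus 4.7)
The plan is to construct $S$ by iteratively adjoining to $R$ finitely many elements of $\fa$, chosen so that collectively they generate $\fa$ as a $T$-ideal, while maintaining the $J$-subring structure via Lemmas~\ref{avoidancelem}, \ref{unittrans}, and~\ref{transextension}. At each intermediate stage I would check whether $\fa$ has become contained in a newly produced distinguished prime; if so, I stop and land in the first alternative of the conclusion.

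First, since $T$ is Noetherian, I fix $T$-generators $a_1,\dots,a_m$ of $\fa$. Using the hypothesis that $\fa\not\subseteq\fq$ for every $\fq\in\cal Q_R\cup\{\frak P\}$, I apply Lemma~\ref{avoidancelem} with $C=\cal Q_R\cup\{\frak P\}$ (which has cardinality at most $|R|+1<|T/\m|$) and $D=\{0\}$ to find an element $a'\in\fa$ lying outside every $\fq\in C$. Then Lemma~\ref{unittrans} applied with $y=a'$ and $z=1$ yields a unit $u_0\in T^{\times}$ such that $b_0:=a'u_0$ has transcendental image modulo every $\fq\in C$ over $R/(\fq\cap R)$, and Lemma~\ref{transextension} furnishes a $J$-extension $R_1$ of $R$ containing $b_0\in\fa$.

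I would then build inductively a chain $R_1\subseteq R_2\subseteq\cdots$ of $J$-extensions. At step $i$, if $\fa\subseteq\fq$ for some $\fq\in\cal Q_{R_i}$, I set $S=R_i$ and obtain the first conclusion. Otherwise, I use Lemma~\ref{avoidancelem} to choose $\lambda_i\in T$ so that $a_i+\lambda_ib_0\notin\fq$ for every $\fq\in\cal Q_{R_i}\cup\{\frak P\}$ (for each such $\fq$ the bad set of $\lambda_i$ is contained in a single coset of $\fq$), then Lemma~\ref{unittrans} to pick a unit $v_i\in T^{\times}$ making $b_i:=(a_i+\lambda_ib_0)v_i$ transcendental modulo each such $\fq$, and finally Lemma~\ref{transextension} to adjoin $b_i$ and produce $R_{i+1}$. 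After $m$ successful steps, $S:=R_{m+1}$ contains $b_0,b_1,\dots,b_m\in\fa\cap S$; since $a_i=v_i^{-1}b_i-\lambda_ib_0\in(b_0,b_i)T$, we have $(b_0,b_1,\dots,b_m)T=\fa$, and therefore $(\fa\cap S)T=\fa$.

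The principal technical obstacle is a distinguished prime $\fq\in\cal Q_{R_i}\setminus\cal Q_R$ introduced by an earlier adjunction that happens to contain both $b_0$ and $a_i$; in that situation no choice of $\lambda_i\in T$ can move $a_i+\lambda_ib_0$ outside $\fq$. Because $\fa\not\subseteq\fq$ (otherwise we would already have stopped), some generator $a_j$ lies outside $\fq$, so I would further shift by a multiple $\mu_ia_j$ selected via Lemma~\ref{avoidancelem}; by processing the generators in a suitable order one ensures that any required $a_j$ has already been adjoined, so the upper-triangular structure linking the $b_i$'s to the $a_i$'s is preserved and the final set $\{b_0,b_1,\dots,b_m\}$ still generates $\fa$ as a $T$-ideal.
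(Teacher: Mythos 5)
Your overall architecture (alternate avoidance/unit-transcendence/\allowbreak Lemma~\ref{transextension} steps, stop early if $\fa$ falls into a new distinguished prime) matches the paper's, but the way you form the elements $b_i$ has a genuine gap, and it sits exactly where you locate the ``principal technical obstacle.'' The problem is that your $b_i$ is always a perturbation of a \emph{specific} generator $a_i$ by previously available elements, and you need the resulting system to be triangular so that each $a_i$ can be recovered from $b_0,\dots,b_i$ over $T$. The proposed repair --- ``process the generators in a suitable order so that any required $a_j$ has already been adjoined'' --- fails in general: after adjoining $b_0$, the new ring $R_1$ may acquire two new distinguished primes $\fq_{pT}$ and $\fq_{p'T}$ (e.g.\ if $b_0=pp'w$ in $R_1$), both containing $b_0$, with $a_1\in\fq_{pT}$, $a_2\notin\fq_{pT}$ and $a_2\in\fq_{p'T}$, $a_1\notin\fq_{p'T}$, while $\fa$ is contained in neither. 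Then whichever generator you process first is blocked by one of these primes, and the only generator that could unblock it has not yet been processed. Allowing shifts by not-yet-processed generators destroys the triangular structure, and the change-of-basis matrix relating the $a_i$'s to the $b_i$'s is then a matrix over $T$ whose invertibility (determinant a unit) you would have to arrange separately; you never address this. It is also telling that your argument nowhere uses the hypothesis that $T$ contains a field $k$ with $|k|=|T|$, which the lemma carries for exactly this purpose.

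The paper's proof sidesteps the whole issue by working inside the finite-dimensional $k$-vector space $V$ spanned by a minimal generating set of $\fa$. At each stage it uses Lemma~\ref{avoidvs} (a vector space over an infinite field is not the union of fewer than $|k|$ subspaces not containing it) applied to $\cal Q_{R_i}\cup\{\frak P\}\cup\{W_i\}$, where $W_i=\operatorname{span}_k(b_1,\dots,b_i)$, to pick $b_{i+1}\in V$ lying outside \emph{every} current distinguished prime and outside $W_i$ --- it is not tied to any particular generator $a_{i+1}$. The resulting $b_1,\dots,b_n$ form a $k$-basis of $V$, so each $a_j$ is a $k$-linear combination of them and $(b_1,\dots,b_n)T=\fa$ automatically; no triangularity or determinant condition is needed. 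If you want to keep your perturbative scheme, you would have to replace the single correction term $\lambda_i b_0$ by a general $k$-linear (not $T$-linear) combination of the generators and prove the invertibility of the coefficient matrix --- at which point you have essentially reconstructed the paper's vector-space argument.
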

\begin{proof}
Let $a_1,\dots,a_n$ be a minimal set of generators for~$\fa$,
and let $V$ denote the $k$-vector space generated by the $a_i$.
We will inductively construct a basis $\{b_1,\dots,b_n\}$ for~$V$
and a chain of $J$-extensions $R=R_0\subset R_1\subset\cdots\subset R_n$.  If, for some $i$, we have $\fa \subseteq \p$ for some $\p \in \cal Q_{R_i}$, we stop the construction of the basis and declare that $S = R_i$.  If, on the other hand, this does not happen, then
the $R_i$ will be chosen so that, for every $i$, there is a unit $u_i\in T$
such that $u_ib_i\in R_i$.
It then follows that for $S = R_n$, we have $(\fa\cap S)T=\fa$.

Suppose now that $R_i$ has been constructed and assume that, if $\fq \in \cal Q_{R_i}$, then $\fa \not\subseteq \fq$.
To construct $R_{i+1}$,
note that each element of~$\cal Q_{R_i} \cup \{ \frak P\}$ is a $k$-vector space and
apply Lemma~\ref{avoidvs} to the set $\cal Q_{R_i}\cup \{ \frak P \} \cup \{W_i\}$,
where $W_i$ is the proper subspace of~$V$ spanned by $b_1,\dots,b_i$.
In this way we obtain an element $b_{i+1}\in V$ that is not contained in any element of~$\cal Q_{R_i}\cup\{\mathfrak{P}\}$ and that is not in the span of $b_1,\dots,b_i$.  

Next, we'll use Lemma~\ref{unittrans} to choose $u_{i+1}$.
For each $\fq\in\cal Q_{R_i}\cup\{\mathfrak{P}\}$, let $D_\fq$ be a set of representatives
for the cosets of $\fq$ that are algebraic over~$R_i/\fq\cap R_i$.
Let $D=\bigcup\{D_\fq\mid\fq\in\cal Q_{R_i}\cup\{\mathfrak{P}\}\}$, and choose $u_{i+1}$
so that
\[	
u_{i+1}b_{i+1}\notin\bigcup\{t+\frak q\mid t\in D, \frak q\in\cal Q_{R_i}\cup\{\mathfrak{P}\}\}.
\]
Finally, let $R_{i+1}$ be a $J$-extension of~$R_i$
containing $u_{i+1}b_{i+1}$, obtained by Lemma~\ref{transextension}.  If $\fa \subseteq \p$ for some $\p \in \cal Q_{R_{i + 1}}$, then let $S = R_{i + 1}$.  Otherwise, continue to construct $R_{i + 2}$.  It follows that either $\fa \subseteq \p$ for some $\p \in \cal Q_S$ or $(\fa\cap S)T=\fa$.
\end{proof}

\begin{lemma}\label{pretheorem}
Under Assumption \ref{ass1}, 
let $t\in T$, let $\fa$ be an ideal of~$T$ not contained
in $\frak P$ or in any element of~$\cal Q_R$,
and suppose that $T$ contains a field $k$ with $|k|=|T|$.
Then there exists a $J$-extension $S$ of $R$ such that
\begin{enumerate}
\item
$t+\frak m^2$ is in the image of the natural map $S\rightarrow T/\m^2$,
\item
Either $\fa \subseteq \p$ for some $\p \in \cal Q_S$ or $(\fa\cap S)T=\fa$,
\item
for every finitely generated ideal $\fb$ of $S$ we have that $\fb T\cap S=\fb$, and
\item
$S$ satisfies \ref{nice6}
\end{enumerate}
\end{lemma}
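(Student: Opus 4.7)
The plan is to construct $S$ at the end of a short chain of $J$-extensions $R \subseteq R_1 \subseteq R_2 \subseteq S$ by applying, in order, Lemma~\ref{cosets}, Lemma~\ref{generators}, and Lemma~\ref{ideals}. The roles of these three lemmas line up precisely with the four conditions in the statement: Lemma~\ref{cosets} gives~(1), Lemma~\ref{generators} gives~(2), and Lemma~\ref{ideals} gives both~(3) and~\ref{nice6}, which is~(4). Since passing to a larger $J$-extension preserves each of~(1), (2), (3), and~(4) (each is a statement either about membership in $T/\m^2$, containment of an ideal, or about finitely generated ideals and height-one primes of $T$ already realized in the ring), the main task is just to verify that the hypotheses of each lemma are available at the stage it is invoked.

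First, apply Lemma~\ref{cosets} to obtain a $J$-extension $R_1$ of $R$ with $t + \m^2$ in the image of $R_1 \to T/\m^2$. Next, one would like to apply Lemma~\ref{generators} with the ideal $\fa$ starting from $R_1$. The main obstacle I expect is that the hypothesis of Lemma~\ref{generators} requires $\fa \not\subseteq \fq$ for every $\fq \in \cal Q_{R_1}$, and although this is assumed for elements of $\cal Q_R$, the extended distinguished set $\cal Q_{R_1}$ may contain new primes $\fq_{pT}$ corresponding to prime elements of $T$ newly appearing as factors in $R_1$. The remedy is a small case split: if $\fa \subseteq \p$ for some $\p \in \cal Q_{R_1}$, set $R_2 = R_1$, since property~(2) is already fulfilled via the ``either'' clause; otherwise the hypothesis of Lemma~\ref{generators} is met, and one obtains $R_2$, a $J$-extension of $R_1$, such that either $\fa \subseteq \p$ for some $\p \in \cal Q_{R_2}$ or $(\fa \cap R_2)T = \fa$.

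Finally, apply Lemma~\ref{ideals} to $R_2$ to produce a $J$-extension $S$ of $R_2$ for which $\fb T \cap S = \fb$ for every finitely generated ideal $\fb$ of $S$ and for which~\ref{nice6} holds. This delivers~(3) and~(4). Property~(1) is immediate since $R_1 \subseteq S$ and the map $R_1 \to T/\m^2$ factors through $S \to T/\m^2$. For~(2), one verifies both branches of the case split are preserved: in the first case $\cal Q_{R_2} \subseteq \cal Q_S$ immediately gives $\fa \subseteq \p$ for some $\p \in \cal Q_S$; in the second case, from $(\fa \cap R_2)T = \fa$ one chains $\fa = (\fa \cap R_2)T \subseteq (\fa \cap S)T \subseteq \fa T = \fa$, forcing $(\fa \cap S)T = \fa$. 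Since $S$ is a $J$-extension of $R_2$, and $R_2$ a $J$-extension of $R_1$, and $R_1$ a $J$-extension of $R$, transitivity (which follows directly from the definition) shows that $S$ is a $J$-extension of $R$, completing the argument.
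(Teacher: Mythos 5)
Your proof is correct and uses essentially the same decomposition as the paper: chain together Lemmas~\ref{generators}, \ref{cosets}, and \ref{ideals}. The only difference is ordering --- the paper applies Lemma~\ref{generators} first, so that its hypothesis on $\cal Q_R$ is available directly and no case split is needed, whereas your reordering forces the (correctly handled) extra case where $\fa$ lands in a new element of the distinguished set.
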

\begin{proof}
Use Lemma~\ref{generators} to find a $J$-extension $R'$ of $R$ such that either $\fa \subseteq \p$ for some $\p \in \cal Q_{R'}$ or $(\fa\cap R')T=\fa$.  
Then use Lemma~\ref{cosets} to find a $J$-extension $R''$ of $R'$ 
such that $t+\m^2$ is in the image of the map $R''\rightarrow T/\m^2$. 
Finally, use Lemma~\ref{ideals} to find a $J$-extension~$S$ of~$R''$ satisfying \ref{nice6} such that if $\fb$ is a finitely generated ideal of~$S$ then $\fb T\cap S=\fb S$.
\end{proof}

\section{The Main Theorem}

\begin{thm}\label{mainTheorem} 
Let $T$ be a complete equicharacteristic local UFD with $3\leq\dim T$ and $|T|=|T/\m|$.
Let $\frak P$ be a nonmaximal prime ideal of~$T$
and let $\{\lambda_d\mid 0\leq d\leq\min(\height\frak P,\dim T - 2)\}$ be a set of cardinal numbers such that
$\sum_d \lambda_d = |T|$.

Then there exists a local UFD $A$ such that $\widehat A = T$ and the following are satisfied:
\begin{enumerate}
\item\label{main1}
$\frak P\cap A=(0)$ and
$\alpha(A,(0))=\height\mathfrak{P}$,
\item\label{main2}
for each $d$ the set $\Delta_d=\{pA\in\spec_1 A\mid\alpha(A,pA)=d\}$
has cardinality $|\Delta_d|=\lambda_d$,
\item\label{main3} 
the ring $T\otimes_A\kappa(pA)$ is regular local for every $pA\in\spec_1 A$, and
\item\label{main4}
the ring $T\otimes_A\kappa(\p)$ is a field for every $\p\in\spec A$ with $\height\p\geq 2$.
\end{enumerate}
\end{thm}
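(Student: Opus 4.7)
The plan is to construct $A$ as the union of a transfinite chain $(R_\alpha)_{\alpha<|T|}$ of $J$-extensions, invoking Lemma \ref{pretheorem} at successor stages and Lemma \ref{unions} at limit stages. First choose the height indicator $\Lambda\colon|T|\to\{1,\ldots,\min(\height\frak P+1,\dim T-1)\}$ so that $|\Lambda^{-1}(d+1)|=\lambda_d$ for every admissible $d$; this is possible because $\sum_d\lambda_d=|T|$. Take $R_0$ to be the prime subring of $T$, adjoining a single transcendental element in the positive-characteristic case (as in the opening of the proof of Lemma \ref{ideals}) to make $R_0$ infinite; its distinguished set and ordering function are empty. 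Enumerate by ordinals below $|T|$ both the elements of $T$ and the set of all prime ideals of $T$ of height at least two (there are at most $|T|$ of each, since $T$ is Noetherian with $|T|=|T/\m|$). At each successor stage $\alpha+1$, apply Lemma \ref{pretheorem} to $R_\alpha$ using the $\alpha$th element and prime ideal (skipping the ideal if it already lies in $\frak P$ or in some element of $\cal Q_{R_\alpha}$); at each limit stage, take unions and apply Lemma \ref{unions}. Setting $A=\bigcup_{\alpha<|T|}R_\alpha$, the ring $A$ satisfies every condition of a $J$-subring except possibly \ref{nice1}, together with \ref{nice6}, the equality $\fb T\cap A=\fb$ for every finitely generated ideal $\fb$, and surjectivity of $A\to T/\m^2$; Lemma \ref{machine} then yields $\widehat A=T$, and $A$ inherits the UFD property from $T$.

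Verification of the four conclusions is a uniform case analysis of primes $\fq$ of $T$ by $\fq\cap A$, driven by the dichotomy in Lemma \ref{pretheorem}. For $\fq\cap A=(0)$: if $\fq\not\subseteq\frak P$ and $\height\fq\geq 2$, then by Lemma \ref{pretheorem} $\fq\subseteq\fq_{p'T}$ for some $\fq_{p'T}\in\cal Q_A$ (else $(\fq\cap A)T=\fq\neq 0$, a contradiction), and the case $\fq=\fq_{p'T}$ forces $\fq\cap A=p'A\neq 0$, so $\fq\subsetneq\fq_{p'T}$ with $\height\fq\leq\height\frak P$; this rules out primes of height exceeding $\height\frak P$ from the generic fiber, and combined with the lower bound from $\frak P$ lying in the generic fiber yields \ref{main1}. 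For $\fq\cap A=pA$ with $pT\in\cal F_A$, a parallel analysis forces $\fq\subseteq\fq_{pT}$ (otherwise $\fq\subseteq\fq_{p'T}$ with $p'\neq p$ would give $pA\subseteq p'A$, impossible), so by \ref{nice4b} the formal fiber ring at $pA$ is regular local, proving \ref{main3}, and $\alpha(A,pA)=\height\fq_{pT}-1=\Lambda(\varphi_A(pT))-1$. For $\fq\cap A=\p$ with $\height\p\geq 2$, flatness of $A\to T$ gives $\height\fq\geq 2$, so $\fq$ was enumerated; neither $\fq\subseteq\frak P$ nor $\fq\subseteq\fq_{p'T}$ can hold (they would force $\p\subseteq(0)$ or $\p\subseteq p'A$), so $(\fq\cap A)T=\fq=\p T$, making $\p T$ the unique prime in the fiber and the fiber ring a zero-dimensional domain, hence a field, proving \ref{main4}. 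Property \ref{main2} follows from the bijection $\spec_1 A\leftrightarrow\cal F_A$ (from \ref{nice6}) and the formula for $\alpha(A,pA)$ just established, provided $|\cal F_A|=|T|$, so that $\varphi_A$ is a bijection onto $|T|$ and $|\Delta_d|=|\Lambda^{-1}(d+1)|=\lambda_d$.

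The main obstacle is achieving $|\cal F_A|=|T|$: this requires interleaving, alongside the recursion above, an adjunction step at each successor stage via Lemma \ref{generators} that adds a nonzero element of the $\alpha$th prime in a reserved $|T|$-sized family of height one primes of $T$ not contained in $\frak P$ (such a family exists because $\dim T\geq 3$ and $|T|=|T/\m|$). To guarantee that Lemma \ref{generators} produces the outcome $(\fa\cap S)T=\fa$ rather than absorbing $\fa$ into a newly-created distinguished prime, one orders the reserved family and chooses the heights of newly-created distinguished primes (in the inductive step inside Lemma \ref{generators}) so that the reserved list always stays ahead of the finitely-many height one primes absorbed into each distinguished prime of $\cal Q_{R_\alpha}$. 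Standard prime-avoidance via Lemmas \ref{avoidancelem} and \ref{unittrans} then ensures that each adjoined element preserves the $J$-subring structure.
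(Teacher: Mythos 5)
Your overall architecture matches the paper's: a transfinite chain of $J$-extensions built with Lemma~\ref{pretheorem} at successor stages and Lemma~\ref{unions} at limits, Lemma~\ref{machine} for $\widehat A = T$, and the same case analysis of primes $\fq$ of $T$ according to whether $\fq$ lies in $\frak P$, lies in a distinguished prime, or satisfies $(\fq\cap A)T=\fq$. But there are two genuine gaps, both traceable to your decision to enumerate only the primes of height at least two; the paper enumerates \emph{all} primes of height $1$ through $\dim T-1$. With your enumeration, a height one prime $qT$ not contained in $\frak P$ and not contained in any distinguished prime is never forced to satisfy $(qT\cap A)T=qT$, so it may meet $A$ trivially and land in the generic formal fiber. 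This is harmless when $\height\frak P\geq 1$, but the theorem permits $\frak P=(0)$, in which case every distinguished prime has height one and conclusion~(1) requires $\alpha(A,(0))=0$; a stray height one prime in the generic fiber then gives $\alpha(A,(0))\geq 1$, so your argument for~(1) fails there.

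The second gap is the ``reserved family'' argument for $|\cal F_A|=|T|$. Its key premise is false: a distinguished prime of height at least two contains at least $|T/\m|=|T|$ height one primes (by the same Lemma~\ref{avoidancelem} argument used in Lemma~\ref{pickq}), not finitely many, so a single newly created $\fq_{p'T}$ can contain your entire reserved family, after which Lemma~\ref{generators} is not even applicable to the remaining reserved primes. Moreover, the heights of newly created distinguished primes are not free parameters to be ``chosen'' — they are dictated by $\Lambda\circ\varphi_S$, and nothing prevents $\Lambda$ from taking only values $\geq 2$ (e.g.\ $\lambda_0=0$), so you cannot arrange for reserved primes to become distinguished primes of height one. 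The paper needs none of this: once $\widehat A=T$ is established, surjectivity of $A\to T/\m^2$ gives $|A|=|A/(\m\cap A)|=|T|$, so the avoidance argument of Lemma~\ref{pickq} applied to the Noetherian local ring $A$ itself yields $|\spec_1 A|=|T|$, and the bijection between $\spec_1 A$ and $\cal F_A$ coming from \ref{nice6} and unique factorization gives $|\cal F_A|=|T|$ with no interleaved adjunction steps at all. Reverting to the paper's enumeration and to this direct cardinality count repairs both gaps.
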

\begin{proof}
We will use transfinite recursion
to construct~$A$ as the union of an ascending chain 
$(R_\beta)_{\beta<|T|}$ of extensions of the prime subring of~$T$.
To make the construction easier to understand, we begin with an overview.

The main task is to ensure that $A$ is Noetherian and has the correct completion, using Lemma~\ref{machine}.
To satisfy the first hypothesis of the lemma, that the map $A\to T/\m^2$ is onto, we form $R_{\beta+1}$ from $R_\beta$ by adjoining a coset representative for an element of~$T/\m^2$, eventually adjoing a representative for every coset.
To satisfy the second hypothesis of the lemma, that finitely generated ideals are unchanged by extension to~$T$ and contraction to the original ring, we construct 
$R_{\beta+1}$ so that it satisfies this property.
Ensuring that~$A$ is a UFD requires no work since in general any local domain whose completion is a UFD is itself a UFD~\cite[VII.3.6, Proposition 4]{bourbaki72}.

Four attributes of the chain $(R_\beta)_{\beta<|T|}$ govern the behavior of the formal fibers of~$A$.
First, the intermediate extensions $R_\beta\subseteq R_{\beta+1}$ are $J$-extensions.
The ring~$A$ consequently satisfies every defining condition of a $J$-subring except~\ref{nice1}, and therefore $\frak P \cap A = (0)$.
Second, each $R_\beta$ satisfies~\ref{nice6}. 
The ring~$A$ consequently satisfies~\ref{nice6}, which will help us show 
that $A$ satisfies property~\eqref{main3}.
Third, we control the heights of new elements of the distinguished sets~$\cal Q_{R_\beta}$ for each extension.
The ring~$A$ consequently satisfies~\eqref{main2}.
Fourth, $R_{\beta+1}$ is formed from $R_\beta$ by adjoining generators of a given prime ideal $\fq_\beta$ of $T$, eventually adjoining generators for every prime ideal not contained in an element of some distinguished set or in $\frak P$.
The ring~$A$ consequently satisfies~\eqref{main4} and $\alpha(A,(0))
 = \height \frak P$.
This concludes the overview of the construction.

Before constructing the chain $(R_\beta)_{\beta<|T|}$
we require several set-theoretic bookkeeping preliminaries.
The first of these is to choose coset representatives and generators of prime ideals to be added at each extension.
The second is to explain how the heights of the elements of the distinguished set
can be chosen in the correct proportions.
This second preliminary will be used to ensure that $|\Delta_d|=\lambda_d$
for every~$d$, since the heights effect the dimensions of the formal fibers.
Specifically, the height of an element of the distinguished set is one more
than the dimension of the corresponding formal fiber.

First, let $C_1$ be a set of coset representatives for~$T/\m^2$
and let $C_2 = \bigcup\{\spec_i T\mid 1\leq i\leq\dim T-1\}$. 
Now use Lemma \ref{avoidancelem} in the same way as we did in the proof of Lemma \ref{pickq} to show that the number of height one prime ideals of $T$ is at least $|T/\m|$.
Since $|T|=|T/\m|$, it follows that $C_1$ and $C_2$ both have cardinality~$|T|$.
We may therefore well-order $C_1$ and~$C_2$ to be order isomorphic to~$|T|$.  We well-order these sets so that they do not have a maximal element.
Let $t_\beta$ and $\fq_\beta$ denote the elements of $C_1$ and $C_2$, respectively,
that correspond to~$\beta$.

Controlling the heights is only slightly more complicated.
The problem is to construct a height indicator $\Lambda$
that assumes the value~$d+1$ exactly~$\lambda_d$ times.
This is basic set theory: the equation $|T| = \sum_d\lambda_d$
and the definition of ordinal addition
mean that the set $|T|$ can be partitioned
into $\min(\height\frak P + 1,\dim T - 1)$ subsets,
the $(d+1)$th of which has cardinality~$\lambda_d$.

Let us now construct the chain~$(R_\beta)_{\beta<|T|}$
and the union~$A$ of its elements.
To start, let~$R_0$ be the prime subring of~$T$,
which is necessarily a field because $T$ is equicharacteristic.
Hence the set $\mathcal F_{R_0}$ is empty
and $R_0$ has a $J$-subring structure
with height indicator~$\Lambda$
and $\cal Q_{R_0} = \varphi_{R_0} = \varnothing$.

The nontrivial step of the recursion is the case where $\beta=\gamma+1$ is a successor ordinal. In this case, if $\fq_\gamma$ is contained in $\frak P$ or in some element of~$\cal Q_{R_\gamma}$,
then define $R_{\beta}$ in the following way.  First let $R'$ be the $J$-extension obtained from Lemma \ref{cosets} so that $t_{\gamma} + \m^2$ is in the image of the map $R' \to T/\m^2$.  Then let $R_{\beta}$ be the $J$-extension of $R'$ obtained from Lemma \ref{ideals} so that $R_{\beta}$ satisfies \ref{nice6} and so that for every finitely generated ideal~$\fa$ of~$R_\beta$ we have that $\fa T\cap R_\beta=\fa$.  If $\fq_\gamma$ is not contained in $\frak P$ or in some element of~$\cal Q_{R_\gamma}$ then
use Lemma~\ref{pretheorem} to construct a $J$-extension $R_\beta$ of $R_\gamma$ 
satisfying \ref{nice6} such that
$t_\gamma+\m^2$ is in the image of the map $R_\beta\to T/\m^2$,
for every finitely generated ideal~$\fa$ of~$R_\beta$ we have that $\fa T\cap R_\beta=\fa$,
and either $(\fq_\gamma\cap R_\beta)T = \fq_\gamma$ or $\fq_{\gamma} \subseteq \fq$ for some $\fq \in \cal Q_{R_{\beta}}$.
Invoking Lemma~\ref{pretheorem} requires that $T$ contains a field of cardinality~$|T|$,
but since $T$ is equicharacteristic it contains a coefficient field
isomorphic to $T/\m$, which has the same cardinality as~$T$.

If $\beta$ is a limit ordinal, then take $R_\beta=\bigcup_{\gamma<\beta}R_\gamma$.
Finally, let $A=\bigcup_{\beta<|T|} R_\beta$.
The construction is thereby concluded.

We now catalog the properties of~$A$.
It is clear that $A$ satisfies~\ref{nice6}. 
Following the proof of Lemma \ref{ideals} the system $(R_\beta)_{\beta < |T|}$ satisfies the hypotheses of Lemma~\ref{unions}.
Therefore $A$ satisfies~\ref{nice2},~\ref{nice3} and~\ref{nice4}
with $\cal F_A = \bigcup_{\beta<|T|} \cal F_{R_\beta}$
and $\cal Q_A = \bigcup_{\beta<|T|} \cal Q_{R_\beta}$.

The fact that $\widehat A = T$ is a consequence of Lemma~\ref{machine}.
Specifically,
if $\fa=(a_1,\ldots,a_n)A$ is a finitely generated ideal of $A$
and $c\in\fa T\cap A$, then there is some $\beta < |T|$ where $\beta$ is a successor ordinal and 
such that $a_1,\ldots,a_n,c\in R_\beta$.
Then $c\in(a_1,\ldots,a_n)T\cap R_\beta=(a_1,\ldots,a_n)R_\beta\subset\fa$,
and so we have that $\fa T\cap A=\fa$.
The map $A\rightarrow T\slash\m^2$ is surjective by construction,
and 
$A$ is quasi-local with maximal ideal $\m\cap A$ since each $R_{\beta}$ is quasi-local with maximal ideal $R_{\beta} \cap \m$.
Using Lemma~\ref{machine} we see that $A$ is Noetherian and $\widehat A=T$,
and furthermore $A$ is a UFD because $T$ is.

We will now show that $A$ satisfies the remaining four conditions. 

The ring~$A$ inherits an ordering function~$\varphi_A$ in the obvious way,
as in the proof of Lemma~\ref{unions}.
Since we well-ordered our sets so that there was no maximal element, $|\cal F_{R_\beta}| < |T|$.
It follows that, for every~$\beta < |T|$, 
the map $\varphi_{R_\beta}$ is not surjective.
Furthermore, $|A|=|T|$ because $|T|=|T/\m|=|T/\m^2|$
and the map $A\to T/\m^2$ is surjective.
An application of Lemma~\ref{avoidancelem}
then shows that $|\cal F_A| = |\spec_1 A| = |T|$.
The map $\varphi_A$ effects a bijection between $\cal F_A$
and an initial segment of~$|T|$, and by what we just observed,
this initial segment has cardinality~$|T|$.
Hence the initial segment equals~$|T|$;
this is the minimality property that characterizes cardinal
numbers among the ordinal numbers.
Therefore, for every $\beta<|T|$,
there is exactly one $pT\in\cal F_A$ with $\varphi_A(pT)=\beta$
and $\height(\fq_{pT}) = \Lambda(\beta)$.
So $|\Delta_d| = \lambda_d$ for each~$d$.

Recall that, for each $\p\in\spec A$, the elements of the formal fiber of $\p$ are in one-to-one correspondence with the prime ideals of $T\otimes_A\kappa(\p)$. 
Let $\p$ be a prime ideal of $A$; since $T$ is faithfully flat over $A$, some $\fq\in\spec T$ lies over $\p$ and $\height\fq\geq\height\p$. 
First suppose that $\height\p=1$, and so $\p=pA$ for some nonzero prime element of $A$.
It follows from conditions~\ref{nice4} and~\ref{nice6}
that $\fq_{pT}$ is the only element of $\cal Q_A$ contained in the formal fiber of $pA$.
Furthermore, if $\fq\in\spec T$ is not contained in $\fq_{pT}$,
then $\fq$ cannot be contained in the formal fiber of $pA$.
To see this, first suppose $\fq \subseteq \fq_{p'T}$ for some $\fq_{p'T} \in \cal{Q}_A$ with $\fq_{p'T} \neq \fq_{pT}$.  Then $\fq \cap A \subseteq \fq_{p'T} \cap A = p'A \neq pA$.  
If $\fq \subseteq \frak{P}$, then $\fq \cap A = (0) \neq pA$.
If, on the other hand, $\fq \not\subseteq \frak{P}$ and no element of $\cal Q_A$ contains $\fq$, then
by our construction we have that $\fq=(\fq\cap A)T\neq pT$
and so $\fq\cap A\neq pA$.
Thus we have shown that the formal fiber ring $T\otimes_A\kappa(pA)$ is local with maximal ideal corresponding to $\fq_{pT}$. It follows that the ring $T\otimes_A\kappa(pA)$ is isomorphic to $T/pT$ localized at the image of the ideal $\fq_{pT}$ in the ring $T/pT$.  Since we chose $\fq_{pT}$ so that its image in $T/pT$ is in the regular locus of $T/pT$, it follows that $T\otimes_A\kappa(pA)$ is a regular local ring.  Every prime ideal of $T$ containing $(pA)T=pT$ and contained in $\fq_{pT}$ is the formal fiber of $pA$, so $\alpha(A,pA)=\dim(T\otimes_A\kappa(pA))=\height\fq_{pT}-\height(pT)$.  By construction, there are $\lambda_d$ elements of $\cal Q_A$ that have height $d + 1$.  Note that, if $\height \fq_{pT} = d + 1$, then $\alpha(A,pA) = (d + 1) - 1 = d$.  It follows that there are $\lambda_d$ elements $pA$ of $\cal Q_A$ satisfying $\alpha(A,pA) = d$.

Now let $\height\p\geq 2$. If $\fq\in\spec T$ lies over $\p$, then $\fq$ is not contained in any element of $\cal Q_A$, and $\fq$ is not contained in $\frak{P}$, so $\fq=(\fq\cap A)T=\p T$. Thus, $\p T$ is the only prime ideal contained in the formal fiber of $\p$, and so $T\otimes_A\kappa(\p)$ is a zero-dimensional domain and hence a field.

Finally, we consider $\p=(0)$. By construction, $A \cap \frak{P} = (0)$ and so $\alpha(A,(0))\geq \height \frak{P}$.   Recall that, for $\fq_{pA} \in \cal Q_A$, we have $\height \fq_{pA} \leq \height \frak{P} + 1$.  So if $\fq$ is a prime ideal of $T$ with height greater than that of $\height \frak{P}$, then $\fq \not\subseteq \frak{P}$ and either $\fq \in \cal Q_A$ or $\fq$ is not contained in any distinguished prime ideal of $A$. It follows by our construction that $\fq \cap A \neq (0)$ and so $\alpha(A,(0)) = \height \frak{P}$.
\end{proof}


\begin{remark} 
The formal fibers of the ring $A$ in Theorem \ref{mainTheorem} are completely understood.  The generic formal fiber of the ring~$A$ constructed in Theorem~\ref{mainTheorem}
consists of the prime ideals of~$T$ contained in~$\frak P$
and the prime ideals contained in $\fq_{pT} \in \cal Q_A$ that do not contain $p$.  If $pA \in \spec_1 A$, then the formal fiber ring of $pA$ is local with maximal ideal $\fq_{pT}$.  In particular, the formal fiber of $pA$ is all prime ideals contained in $\fq_{pT}$ that contain $p$.  Finally, if $\p$ is a prime ideal of $A$ with height greater than one, then the formal fiber of $A$ at $\p$ is $\{ \p T \}$.
\end{remark}

\begin{example}
If $T=\mathbb C [[w,x,y,z]]/(x^2 + y^3 + z^5)$, then there exists a local UFD $A$ such that $\widehat{A}= T$, $\alpha(A,(0))=1$, and $\alpha(A,pA)=1$ for every $pA\in\spec_1 A.$
\end{example}

\section{The Excellent Case}

In this section we strengthen the results of the previous one to obtain
excellent rings.
Recall that by the Cohen Structure Theorem, a complete equicharacteristic local ring
is regular if and only if it is a power series ring in finitely many variables with coefficients in a field.
\begin{thm} \label{excellence}
Let $T$ be a complete equicharacteristic regular local ring of characteristic zero with $3\leq\dim T$ and $|T|=|T/\m|$.
Let $\frak P$ be a nonmaximal prime ideal of~$T$,
and let $\{\lambda_d\mid 0\leq d\leq\min(\height\frak P,\dim T - 2)\}$ be a set of cardinal numbers such that
$\sum_d \lambda_d = |T|$.

Then there exists an excellent regular local ring $A$ such that $\widehat A =T$ and the following are satisfied:
\begin{enumerate}
\item 
$\frak P\cap A=(0)$ and
$\alpha(A,(0))=\height\mathfrak{P}$,
\item 
for each $d$ the set $\Delta_d=\{pA\in\spec_1 A\mid\alpha(A,pA)=d\}$
has cardinality $|\Delta_d|=\lambda_d$,
\item $T\otimes_A\kappa(pA)$ is a regular local ring for every $pA\in\spec_1 A$, and
\item $T\otimes_A\kappa(\p)$ is a field for every $\p\in\spec A$ with $\height\p\geq 2$.
\end{enumerate}
\end{thm}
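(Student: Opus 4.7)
The plan is to appeal to Theorem~\ref{mainTheorem} to produce a candidate ring $A$, and then to show that under the stronger hypotheses ($T$ regular of characteristic zero) this $A$ is automatically excellent, with no modification to the construction required.

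I would begin by invoking Theorem~\ref{mainTheorem} on the given data $(T, \frak P, \{\lambda_d\})$ to obtain a local UFD $A$ with $\widehat{A} = T$ satisfying conditions (1)--(4); conditions (1)--(4) of the present theorem are then inherited verbatim. The first observation is that $A$ is in fact regular: we have $\dim A = \dim T$, and since $\m = (\m \cap A) T$ the completion map induces an isomorphism $(\m \cap A)/(\m \cap A)^2 \cong \m/\m^2$ of vector spaces over the common residue field. Since $T$ is regular, the embedding dimension of $A$ equals its Krull dimension, so $A$ is regular. In particular, $A$ is Cohen--Macaulay and hence universally catenary.

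Next I would verify that $A$ is a G-ring by checking that each formal fiber ring is geometrically regular. Here the characteristic zero hypothesis is what lets regularity be upgraded to geometric regularity, since all residue field extensions in sight are automatically separable. The fiber at a height-one prime is a regular local ring by condition (3) of Theorem~\ref{mainTheorem}, and the fiber at a prime of height at least two is a field by condition (4). The remaining case is the generic formal fiber ring $T \otimes_A A_{(0)}$, which is the localization of $T$ at the multiplicative set $A \setminus \{0\}$; its local rings have the form $T_\fq$ for prime ideals $\fq \in \spec T$ with $\fq \cap A = (0)$, and each such $T_\fq$ is regular because $T$ is. Combining universal catenarity with the G-ring property then gives that $A$ is excellent.

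The main subtlety is handling the generic formal fiber: this is the only place where we genuinely need the stronger hypothesis that $T$ be regular rather than just a UFD, and the characteristic zero hypothesis is what allows us to pass uniformly from regularity of the formal fiber rings to geometric regularity without any separability bookkeeping at the residue field extensions of $A$.
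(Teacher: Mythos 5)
Your proposal is correct and takes essentially the same route as the paper: invoke Theorem~\ref{mainTheorem}, use the characteristic-zero hypothesis to reduce geometric regularity of the formal fibers to regularity, treat the generic fiber by localizing $T$ at the primes lying over $(0)$, and finish with universal catenarity. The only (immaterial) divergence is that you obtain universal catenarity from regularity of $A$ (hence Cohen--Macaulayness), whereas the paper deduces it from $A$ being formally equidimensional because $T$ is a domain.
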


\begin{proof} 
First construct a subring $A$ of $T$ as in Theorem \ref{mainTheorem}.  Since $T$ is a regular local ring and $\widehat{A} = T$, $A$ is a regular local ring.  It remains to show is that $A$ is excellent.

We first show that the formal fibers of $A$ are geometrically regular. Let $\p\in\spec A$; then we must show $T\otimes_A L$ is a regular ring for every finite field extension $L$ of $\kappa(\p)$. It is enough to consider purely inseparable extensions (see Remark 1.3 of \cite{rotthaus97}), and so, since $A$ contains the rationals, $\kappa(\p)$ has characteristic 0 and we need only show that $T\otimes_A\kappa(\p)$ is regular. If $\p$ is a nonzero prime ideal of $A$, then $T\otimes_A\kappa(\p)$ is a regular local ring or a field, and so we now consider $T\otimes_A\kappa((0))$.

Let $\fq\otimes_A\kappa((0))$ be in the formal fiber of $(0)$. The localization of $T\otimes_A\kappa((0))$ at $\fq\otimes_A\kappa((0))$ is isomorphic to $T_{\fq}$, which is regular since $T$ is a regular local ring. Then the localization of $T\otimes_A\kappa((0))$ at each of its prime ideals is a regular local ring, and so $T\otimes_A\kappa((0))$ is regular. $A$ is formally equidimensional since $T$ is a domain, and so $A$ is universally catenary~\cite[Theorem 31.6]{matsumura86} and hence is our desired excellent ring.
\end{proof}

Using Theorem~\ref{excellence} we obtain
the following example, which answers the original question
of Heinzer, Rotthaus, and Sally.
Since every uncountable cardinal number is realized
as the cardinality of a field, it follows that the set~$\Delta$
in their question can be made to have any uncountable cardinality.

\begin{example}\label{exccor} 
Let $k$ be an uncountable field of characteristic zero and let $T=k[[x_1,x_2,\dots,x_n]]$ with $n=\dim T\geq 3$.  Let $d$ be an integer with $0 \leq d \leq n - 2$.
Then there exists an excellent regular local ring $A$ such that $\widehat A = T$ 
and the set $\Delta=\{pA\in\spec_1 A\mid\alpha(A,(0))=\alpha(A,pA) = d\}$ is equal to the set of all height one prime ideals of $A$.  In particular, because the ring $A$ we constructed is such that $|A| = |T| = |\spec_1A|$, we have that $|\Delta| = |T|$.
\end{example}

Finally, we provide an example illustrating that we can construct an excellent regular local ring such that the number of height one prime ideals whose formal fibers are of a specific dimension is prescribed.

\begin{example}\label{ex} 
Let $T=\mathbb{C}[[x_1,x_2, x_3,x_4,x_5]]$. Then there exists an excellent regular local ring $A$ such that $\widehat A = T$ and such that $\alpha(A,(0)) = 4$, $|\Delta_0|=|T|,|\Delta_1|=1,|\Delta_2|=\aleph_0$, and $|\Delta_3|=|T|$ where $\Delta_d=\{pA\in\spec_1 A\mid\alpha(A,pA) = d\}$ for $0\leq d\leq 3$.  
\end{example}


\section*{Acknowledgments}
We would like to thank Williams College for hosting the SMALL REU; the Clare Boothe Luce Program and the National Science Foundation (via grant DMS1347804) for generously supporting our research; and Raymond C. Heitmann for reminding us that the singular locus of an excellent ring is Zariski closed.

\bibliography{references}

\end{document}